\newtheorem{theorem}{Theorem}[subsection]
\newtheorem{lemma}[theorem]{Lemma}
\newtheorem{proposition}[theorem]{Proposition}
\newtheorem{corollary}[theorem]{Corollary}
\newtheorem{definition}[theorem]{Definition}
\newtheorem{remark}[theorem]{Remark}
\newtheorem{remarks}[theorem]{Remarks}
\newtheorem*{example1*}{Example - the case $H=GL(4)$}
\newtheorem*{example2*}{Example - the case $H'=GL(2,D)$, $D$ a quaternion division algebra}
\def\RM {\mathbb R}
\def\CM {\mathbb C}
\def\GM {\mathbb G}
\def\Hom{\mathrm{Hom}}
\def\Ker{\mathrm{Ker}}
\def\Coker{\mathrm{Coker}}
\def\card{\mathrm{card}}
\def\Ind{\mathrm{Ind}}
\def\NN{{\mathbf N}}
\def\adef{{\mathbb A_F}}
\def\H{H}
\def\G{G}
\def\Hplus{\H^+}
\def\T{T}
\def\TH{\T^\H}
\def\TG{\T^\G}
\def\Z{Z}
\def\ZH{\Z^H}
\def\ZG{\Z^G}
\def\Zun{\Z_1}
\def\ZO{\Z_0}
\def\Zplus{\Z^+}
\def\bs{\backslash}
\def\f{f}
\def\coh{{\mathbf H}}
\def\K{K}
\def\mun{^{-1}}
\def\cusp{\mathrm{cusp}}
\def\vf{\varphi}
\def\pni{\par\noindent}
\title[ Central morphisms and cuspidal representations]
{Central morphisms and Cuspidal automorphic Representations}
\begin{document}
\author{Jean-Pierre Labesse}
\thanks{This work was partly begun  during the stay of the second author in the fall term 2017 
at the School of Mathematics, Institute for Advanced Study, Princeton, and then pursued at the 
Max-Planck-Institute for Mathematics, Bonn; he gratefully acknowledges the funding at the IAS, 
provided by the Charles Simonyi Endowment, as well as the support at the MPIM.}
\address{Institut de Math\'ematique de Marseille, UMR 7373,
Aix-Marseille Universit\'e,
France}

\email{Jean-Pierre.Labesse@univ-amu.fr}

\author{Joachim Schwermer}
\address{Faculty of Mathematics, University Vienna, Oskar-Morgenstern-Platz 1, A-1090 Vienna, Austria  resp. Max-Planck-Institute 
for Mathematics, 
Vivatsgasse 7, D-53111 Bonn, Germany.}
\email{Joachim.Schwermer@univie.ac.at}

\date{}
 
\maketitle

\section{Introduction}

\subsection{Main theorem}
Let $F$ be a global field (of arbitrary characteristic) and denote by $\adef$ its ring of adles.
Let $\G$ and $\H$ be two connected reductive group defined over $F$
 endowed with  an $F$-morphism 
$f: \H\to\G$ such that the induced morphism $\H_{der}\to\G_{der}$ on the derived groups is a central isogeny.  
We study how automorphic representations behave, under the morphism induced between groups
of adŽlic points, via restriction and induction. 
We also discuss  similar statements for representations of groups over local fields.

Consider the restrictions to $f(\H(\adef))$
of a cuspidal representation $\pi$  of $\G(\adef)$; it splits into a possibly infinite sum of irreducible
representations of  $\H(\adef)$ and some of them may not be automorphic.  Conversely given 
a cuspidal representation $\sigma$  of $\H(\adef)$ it is not always possible to find it in the
restriction of some cuspidal representation $\pi$ of $\G(\adef)$. 
Our main result is:

\begin{theorem} \label{goal} 
Given any irreducible cuspidal representation $\pi$ of $\G(\adef)$
its restriction to $f(\H(\adef))$ contains a cuspidal representation $\sigma$ of $\H(\adef)$.
Conversely,   assuming moreover that  $f$ is an injection, 
 any irreducible cuspidal representation $\sigma$ of $\H(\adef)$ 
appears in the restriction of some cuspidal representation $\pi$ of $\G(\adef)$.
\end{theorem}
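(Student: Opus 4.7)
The plan is to exploit the structural feature forced by the central isogeny hypothesis: $f$ sets up a bijection between parabolic $F$-subgroups of $H$ and $G$, and because unipotent radicals lie in the derived groups, they are matched isomorphically by $f$. This allows constant terms on $H$ and $G$ to be compared directly.

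For the direct implication, given a cusp form $\phi\in V_\pi$, I would consider the pullback $\phi\circ f$ on $H(F)\backslash H(\adef)$. Its constant term along any parabolic $P_H=M_HN_H$ of $H$ equals the constant term of $\phi$ along the corresponding $G$-parabolic, pulled back through $f$, and hence vanishes; thus $\phi\circ f$ lies in $L^2_{\mathrm{cusp}}(H(F)\backslash H(\adef))$. The pullback cannot vanish for every $\phi\in V_\pi$, for then the same would hold after replacing $\phi$ by every right translate $R(g_0)\phi\in V_\pi$, forcing $\phi\equiv 0$ on all of $G(\adef)$. Decomposing the $H(\adef)$-module generated by the nonzero pullbacks into irreducibles extracts the required $\sigma$.

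For the converse, injectivity of $f$ upgrades the central isogeny $H_{\mathrm{der}}\to G_{\mathrm{der}}$ to an isomorphism; hence $G_{\mathrm{der}}\subset f(H)$, every unipotent radical of a $G$-parabolic lies in $f(H)$, and $f(H)\cap Z_G=f(Z_H)$. Given $\sigma$ cuspidal with cusp form $\varphi\in V_\sigma$, I would construct an extension $\tilde\varphi$ in two stages. First, choose a unitary character $\omega$ of $Z_G(\adef)$ trivial on $Z_G(F)$ and extending the central character of $\sigma$ (viewed through $f$ on $f(Z_H(\adef))$); set $\varphi'(f(h)z)=\omega(z)\varphi(h)$ on $f(H(\adef))\cdot Z_G(\adef)$. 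Second, propagate by left $G(F)$-translation: $\tilde\varphi(\gamma g_1)=\varphi'(g_1)$ on $G(F)\cdot f(H(\adef))\cdot Z_G(\adef)$, and zero elsewhere. The $G(\adef)$-module generated by $\tilde\varphi$ should then contain the desired cuspidal $\pi$.

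The main obstacle is showing that $\tilde\varphi$ is well-defined, square-integrable, and cuspidal on $G$. Cuspidality is the cleanest point: for a $G$-parabolic with unipotent radical $N\subset f(H)$, the constant term $\int_{N(F)\backslash N(\adef)}\tilde\varphi(ng)\,dn$ either vanishes pointwise (when $g\notin G(F)f(H(\adef))Z_G(\adef)$, all translates $ng$ stay outside this set because $N\subset f(H)$) or, via left $G(F)$-invariance, reduces through $f^{-1}$ to the vanishing constant term of $\varphi$ along the matched $H$-parabolic. The more delicate issues are (i) the existence of the extended character $\omega$, a Pontryagin-dual question about the quotient $Z_G(\adef)/Z_G(F)f(Z_H(\adef))$ that requires a divisibility argument on adelic tori together with the standard triviality of the central character of $\sigma$ on $Z_H(F)$, and (ii) square-integrability, which reduces the integral of $|\tilde\varphi|^2$ over $Z_G(\adef)G(F)\backslash G(\adef)$ to that of $|\varphi|^2$ over $Z_H(\adef)H(F)\backslash H(\adef)$ via the finiteness of the abelian double quotient $G(F)\backslash G(\adef)/f(H(\adef))Z_G(\adef)$.
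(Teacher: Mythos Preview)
Your direct implication is essentially a concrete rendering of the paper's argument: the pullback $\phi\mapsto\phi\circ f$ is the inverse of the paper's bijection $(\star)$ composed with restriction from $\Hplus$ to $f(\H(\adef))$, and the matching of unipotent radicals is exactly what makes $(\star)$ respect cuspidality. One point you glide over is square-integrability of $\phi\circ f$; this is harmless since cusp forms are bounded and $\ZH(\adef)\H(F)\bs\H(\adef)$ has finite volume, but it should be said.

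The converse, however, has a genuine gap, and it is precisely the one this paper was written to repair. Your extension $\tilde\varphi(\gamma g_1)=\varphi'(g_1)$ is well-defined only if $\varphi'$ is left-invariant under every $\delta\in\G(F)\cap f(\H(\adef))\ZG(\adef)$. Writing such a $\delta$ as $f(h_0)z_0$, this forces the left translate $L(h_0)\varphi$ to equal $\omega(z_0)^{-1}\varphi$; in other words $\varphi$ must be an eigenvector for the left action of the group $\Zun=f^{-1}\bigl(\ZG(\adef)\G(F)\cap f(\H(\adef))\bigr)$. A generic cusp form $\varphi\in V_\sigma$ has no reason to satisfy this unless $\Zun=\ZO\cdot\H(F)$ (with $\ZO=\ZG(\adef)\cap\H(\adef)$), which is a local-global statement that can fail: for $\H=SL(n)\subset\G=GL(n)$ it amounts to the Grunwald--Wang principle for $n$-th powers. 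The paper's fix is to first decompose the $\sigma$-isotypic subspace of $L^2_\cusp$ under the left $\Zun$-action, choose $\varphi$ in a nonzero $\omega_1$-eigenspace $W_\sigma(\omega_1)$, and only then carry out your construction. With that choice your $\tilde\varphi$ becomes the paper's $\varphi^+$ and the rest of your outline goes through, with one further correction: the double quotient $\G(F)\bs\G(\adef)/f(\H(\adef))\ZG(\adef)$ is compact but not finite in general (it can be infinite in positive characteristic), so ``extend by zero'' should be replaced by honest unitary induction from $\Hplus$ to $\G(\adef)$, which is what the paper does.
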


Experts in the theory of automorphic forms  have  expected such a natural result, already known in some cases.
In fact this is in agreement with Langlands functoriality conjectures which relate
local or automorphic representations for a group $\G$
to elements in the first cohomology set  $H^1(W_F,\check\G)$ of Weil groups with value 
in the dual group $\check\G$ and, in particular, with lifting results established in \cite{Lab}
for the map $H^1(\check f):H^1(W_F,\check\G)\to H^1(W_F,\check\H)$
when $f$ is injective.

Local results,  quite elementary when the characteristic of the field is zero and
already known in general to some extent (see for example \cite{T}), are given here  
for the sake of completeness. In the global case,  we do not know of any published reference
 except when  $H = SL(n)$,  $\G = GL(n)$.
For this pair of groups, Theorem \ref{goal}  (or rather its reformulation as in \ref{main})
is claimed in  \cite[Sect. 3]{LS}   if  $F$  is a  number field.
Unfortunately, as was pointed out by Laurent Clozel, the proof given in \cite{LS},
which generalizes the argument given in \cite{LL}  for $n=2$,
does not apply for arbitrary $n$
since we implicitly assumed the validity of the local-global principle for the $n$-th powers  in  $F$, 
which may fail\footnote {%
The Grunwald-Wang Theorem \cite[Chap. X, Thm. 1]{AT}
computes the obstruction group to this local-global principle and, in fact, this group can be
non-trivial. This only happens in very special cases: in particular $8\vert n$
is among the necessary conditions.}.
The argument is corrected here.

 \subsection{Organization of the paper}
In Section \ref{cliff}, generalizing Clifford's theory for finite groups,
we consider  a pair 
$B \subset A$ of a locally compact groups where $B$ is a closed invariant subgroup of $A$
 such that $B\bs A$  is abelian and compact and we 
analyze, under a suitable finiteness condition, the interplay via restriction and induction between irreducible unitary 
representations $\pi$ of $A$ and $\sigma$ of $B$. 
   Results of Section \ref{cliff} are used in Section  \ref{loc} to investigate 
extension, induction and restriction of irreducible unitary representations between pairs of groups of points 
over local fields of arbitrary characteristic for  pairs of connected reductive groups $G$ and $H$ 
as above.  The local results are summarizd in \ref{local} and \ref{bija}.
Similar questions are studied in section
 \ref{cusp}  for cuspidal automorphic representations of groups of points
over adles of global fields of arbitrary characteristic
and our main results are Theorems \ref{cuspa} and \ref{cuspb}. They imply in particular
the above Theorem \ref{goal}.
We rely on structural results taken care of in Section \ref{setting} or, 
by a very different approach, in the appendix provided by B. Lemaire. 
We conclude the paper with a new multiplicity formula.

\subsection{Acknowledgements}
 We thank Bertrand Lemaire for providing us with the appendix. We also thank Gaetan Chenevier 
 who drew our attention to his unpublished note  \cite{Ch} and for pointing out a mistake 
 in a previous version of the present note. Finally we are grateful to Guy Henniart who suggested, 
 following his paper \cite{He}, to consider not only inclusions 
 $\H\subset\G$ but also morphisms $\H\to\G$ inducing central isogenies between derived groups.

\section{Variation on a theme by Clifford}\label{cliff}

In this section we  establish a variant of Clifford's theory for finite groups \cite{CR} (already used  implicitly 
 in \cite{LL} or  explicitly in \cite{He} but in a slightly different context).
This is elementary but,
not knowing of any reference valid in our setting, it is given with some details  for the convenience of the reader.   
In particular, we prove\footnote{%
The reader may wonder why we give a proof of such a result since
 there are many references for instances where  Frobenius reciprocity is known to hold. In fact, 
 for admissible representations of  reductive groups over non archimedean local fields
  Frobenius reciprocity is well known and could be used in certain sections below where we deal with this specific case.
  Nevertheless more general groups and different kinds of representations will occur
 and we did not find any reference for the form
  we need: for example Moore's result (Section 4 of \cite{MCC}), which is the closest to our needs we could find,
   applies only to  finite dimensional representations; similarly
 Mackey's quite general theorems (e.g.\! Theorem 5.1 of \cite{Mc}) does not seem to be of any help 
 since the representations we are dealing with
 may show up with measure zero in the spectral decomposition of the right regular representations
 for the groups we study.}
  a form of Frobenius reciprocity in \ref{frobenius}.

\subsection{Notation}
 It is understood that unitary representations are strongly continuous and characters 
  (i.e.\! one dimensional representations) are unitary.
By abuse of notation we shall often denote by the same symbol a quotient and a set of representatives for its elements.

Let $A$ be a locally compact group and $B$ a closed  subgroup such that 
 $B\bs A$ has an $A$-invariant measure.
Let $\pi$ be an irreducible unitary representation of $A$ and
$\sigma$ an irreducible unitary representation of $B$.  
Given  $g\in A$ we denote by $\sigma^g$ the  representation of $B$ 
defined by
$$\sigma^g(x)=\sigma(gxg^{-1})\,\,.$$
Let   $\rho$ be the induced representation
 $$\rho=\mathrm{Ind}_B^A\sigma\,\,.  $$
 We denote by $<v, w>_\sigma$ the scalar product of two vectors $v$ and $w$
 in the space $V_\sigma$ of the representation $\sigma$.
 Recall that $V_\rho$, the space of $\rho$, is the set of classes of measurable functions
 from $A$ to $V_\sigma$  (up to equality almost everywhere),
such that $ f(hg)=\sigma(h)f(g)$ and that are square integrable on $B\bs A$.

\subsection{A first finiteness assumption} 

Assume $B\bs A$ is  of finite volume.

 \begin{lemma}\label{frob}  
There is an injective map 
$$\mathrm{Hom}_B(\pi\vert_B,\sigma)\to\mathrm{Hom}_A(\pi,\mathrm{Ind}_B^A\sigma)\,\,.$$
If $\pi\vert_B$ and $\pi'\vert_B$  have a common constituent $\sigma$ then
$\pi$ and $\pi'$ both occur in $\rho=\mathrm{Ind}_B^A\sigma\,\,.  $
 \end{lemma}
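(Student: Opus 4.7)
The plan is to construct the injection explicitly via the standard Frobenius-type recipe. Given $T\in\Hom_B(\pi|_B,\sigma)$, I would attach to every $v\in V_\pi$ the vector-valued function on $A$ defined by
\[
F_v(g):=T\bigl(\pi(g)v\bigr),
\]
and set $\Phi(T)(v):=F_v$. First, left $B$-equivariance $F_v(hg)=\sigma(h)F_v(g)$ is immediate from the $B$-intertwining property of $T$, so $F_v$ satisfies the transformation law required of elements of $V_\rho$. Next, and this is the step that uses the finiteness hypothesis of the subsection, I would check that $F_v$ is square integrable on $B\bs A$. Since $\pi$ is unitary and $T$ is bounded, one has the uniform pointwise estimate $\|F_v(g)\|_\sigma\leq\|T\|\cdot\|v\|$ for every $g\in A$; finiteness of the volume of $B\bs A$ then converts this bound into square integrability, producing a genuine element of $V_\rho$.

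Once $\Phi(T)$ is known to land in $V_\rho$, $A$-equivariance is the short calculation $F_{\pi(g_0)v}(g)=T(\pi(gg_0)v)=F_v(gg_0)=(\rho(g_0)F_v)(g)$, and boundedness of $\Phi(T)$ follows from the same pointwise estimate. For injectivity of $T\mapsto\Phi(T)$, I would observe that $g\mapsto T(\pi(g)v)$ is continuous in $g$ (strong continuity of $\pi$ together with boundedness of $T$). Hence if $F_v$ vanishes as an element of $L^2(B\bs A;V_\sigma)$, its continuous representative vanishes identically on $A$; specializing to $g=e$ gives $T(v)=0$ for every $v$, so $T=0$.

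For the second assertion, I would argue that in the unitary setting every irreducible subquotient is a subrepresentation (orthogonal complements are invariant), so a common irreducible constituent $\sigma$ of $\pi|_B$ and $\pi'|_B$ yields nonzero intertwiners $T\colon\pi|_B\to\sigma$ and $T'\colon\pi'|_B\to\sigma$. Applying the first part to each, one obtains nonzero elements of $\Hom_A(\pi,\rho)$ and $\Hom_A(\pi',\rho)$; irreducibility of $\pi$ and $\pi'$ promotes these nonzero intertwiners to embeddings, so both $\pi$ and $\pi'$ occur in $\rho$.

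The main obstacle I foresee is the measurability/integrability point: making sense of $F_v$ as a genuine element of the Hilbert space $V_\rho$ (as opposed to a pointwise-defined function), and in particular verifying that different choices of representative in $V_\rho$ do not corrupt the later continuity argument for injectivity. The finite-volume assumption is what tames this step; the rest of the argument is essentially formal, and I would expect everything outside this measurability check to be routine.
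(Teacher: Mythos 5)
Your proposal is correct and follows essentially the same route as the paper: the same explicit map $T\mapsto\bigl(v\mapsto(g\mapsto T(\pi(g)v))\bigr)$, with the finite-volume hypothesis supplying square-integrability of the bounded continuous functions $F_v$. You merely spell out in more detail the injectivity step (via continuity and evaluation at the identity) and the passage from a common constituent to nonzero intertwiners, which the paper leaves as "obvious"/"immediate".
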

 
 \begin{proof}
  Consider an  element $\Psi\in  \mathrm{Hom}_B(\pi\vert_B,\sigma)$ and $w\in V_\pi$. The function
$$\varphi_w:g\mapsto\Psi(\pi(g)w)\qquad\qquad\hbox{for}\quad g\in A$$ defines a vector in   $V_\rho$. In fact
 this is a continuous function which satisfies the required functional equation and
 whose square norm  $$g\mapsto||\varphi_w(g)||^2:=<\varphi_w(g),\varphi_w(g)>_\sigma\le
 ||\Psi||^2<w,w>_\pi
 $$
 is bounded and hence integrable since $B\bs A$ is of finite volume.  The map
$$\Phi:w\mapsto \varphi_w$$ defines an element in $\mathrm{Hom}_A(\pi,\rho)$.
The assignment $\Psi\mapsto\Phi$ is obviously injective. 
The second assertion follows immediately.
\end{proof}

Assume from now on that $A$ is unimodular and
$B$ is an invariant closed subgroup. The quotient group $C=B\bs A$ is also assumed to be abelian   compact
 and endowed with the normalized Haar measure i.e.\! such that $\mathrm{vol}(C)=1$.
Let $X$ be the discrete group of characters of $C$.
 We observe that if $\pi$ occurs in $\rho  =\mathrm{Ind}_B^A\sigma\,\,$ then, given $\chi\in X$,
 the representation
 $\pi\otimes\chi$ also occurs in $\rho$ with the same multiplicity.

\begin{proposition}\label{cliffb} 
Given $\pi$ and $\pi'$ two irreducible unitary representations of $A$ whose restrictions
to $B$ have a constituent $\sigma$ in common, then there exist a character $\chi\in X$ such that
$$\pi'\simeq\pi\otimes\chi\,\,.$$
The representation $\rho$ is an Hilbert direct sum of representations of the form  $\pi\otimes\chi$.
\end{proposition}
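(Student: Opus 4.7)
The plan proceeds in two steps. First I prove the twist relation $\pi'\simeq\pi\otimes\chi$ via a $C$-action on an intertwiner space. Then I use unitary twist operators on $V_\rho$ to obtain the direct sum decomposition.

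For the first step, the intertwiner space $\Hom_B(V_\pi,V_{\pi'})$ is nonzero because composing the orthogonal projection $V_\pi\twoheadrightarrow V_\sigma$ with an isometric embedding $V_\sigma\hookrightarrow V_{\pi'}$ (both afforded by the assumption that $\sigma$ is a common $B$-constituent) yields a nonzero element $T_0$. The formula $a\cdot T=\pi'(a)T\pi(a)^{-1}$ defines an $A$-action on $\Hom_B(\pi,\pi')$ that descends to $C=B\bs A$ by the $B$-equivariance of $T$. For each $\chi\in X$, the Fourier coefficient
\[
T_\chi:=\int_C\chi(c)^{-1}(c\cdot T_0)\,dc
\]
defines an element of $\Hom_A(\pi\otimes\chi,\pi')$, and Fourier analysis on the compact abelian group $C$ forces at least one $T_\chi$ to be nonzero. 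Schur's lemma then gives $\pi'\simeq\pi\otimes\chi$.

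For the second step, introduce the unitary operators $U_\chi$ on $V_\rho$ defined by $(U_\chi f)(g)=\chi(g)f(g)$, where $\chi\in X$ is viewed as a character of $A$ trivial on $B$. A direct computation yields $\rho(a)U_\chi=\chi(a)U_\chi\rho(a)$, so $U_\chi$ intertwines $\rho$ with $\rho\otimes\chi^{-1}$ and maps the $\pi$-isotypic subspace $V_{(\pi)}\subset V_\rho$ onto $V_{(\pi\otimes\chi)}$. By Lemma~\ref{frob}, $V_{(\pi)}\ne0$, so each $V_{(\pi\otimes\chi)}$ is nonzero and these subspaces are pairwise orthogonal.

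The main obstacle is showing that $\bigoplus_{\chi\in X}V_{(\pi\otimes\chi)}$ exhausts $V_\rho$. The plan is to combine the first step with the Frobenius reciprocity $\Hom_A(\tau,\rho)\simeq\Hom_B(\tau|_B,\sigma)$ to be established later (see \ref{frobenius}): any irreducible subrepresentation $\tau$ of $\rho$ would have $\sigma$ as a $B$-constituent, and the first step would then force $\tau\simeq\pi\otimes\chi$. The delicate point — really the heart of the argument — is that this must force $\rho$ to decompose as a sum of isotypic components with no leftover continuous spectrum; this discrete decomposability does not hold for arbitrary unitary representations of locally compact groups and will be extracted from the specific induced structure $\rho=\Ind_B^A\sigma$ together with Frobenius reciprocity.
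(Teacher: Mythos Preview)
Your Step~1 is correct and gives a clean alternative for the first assertion: rather than embedding both $\pi$ and $\pi'$ into $\rho$ and comparing there, you run Fourier analysis directly on the $C$-action on $\Hom_B(\pi,\pi')$. One only needs that $c\mapsto(c\cdot T_0)v$ is continuous for each $v$, which follows from strong continuity of $\pi$ and $\pi'$, so that the vanishing of all $T_\chi$ would force $T_0=0$.

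Step~2, however, does not go through. Your plan is to invoke Proposition~\ref{frobenius}, but that proposition is proved \emph{using} the present Proposition~\ref{cliffb} --- specifically its second assertion, applied to the pair $(B(\pi),B)$ to decompose $\Ind_B^{B(\pi)}\sigma$ --- so the appeal is circular; it also carries the extra hypothesis that $X(\pi)$ be finite, which is not assumed here. Even granting Frobenius reciprocity, your outline would only show that every irreducible \emph{subrepresentation} of $\rho$ is a twist of $\pi$, which does not by itself exclude a continuous remainder; you flag this as ``the heart of the argument'' but do not supply it. The paper avoids all of this by working directly in the function model: fix one copy of $\pi$ in $\rho$ via $w\mapsto\varphi_w$ (Lemma~\ref{frob}), and for $f$ orthogonal to every $\varphi_w\chi$ observe that $\int_C\chi(g)\langle\varphi_w(g),f(g)\rangle_\sigma\,d\dot g=0$ for all $\chi\in X$, whence $\langle\varphi_w(g),f(g)\rangle_\sigma=0$ for almost all $g$ and all $w$; since $w\mapsto\varphi_w(g)$ is a nonzero $B$-intertwiner onto the irreducible $\sigma^g$, its image is all of $V_\sigma$ and so $f(g)=0$ a.e. This pointwise Fourier argument is precisely what replaces the abstract discrete decomposability you were hoping to import, and it needs no finiteness hypothesis on $X(\pi)$.
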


\begin{proof}
Since both the restrictions of
 $\pi$ and $\pi'$  to $B$ have $\sigma$ as a constituent in common,  Lemma~\ref{frob} shows
  they both occur in $\rho$. Let us denote by $V_\pi$ the space of the representation $\pi$.
 Let $\Psi$ be a non-trivial intertwining operator in $$\Hom_B(\pi\vert_B,\sigma)$$
 and consider for $w\in V_\pi$ the function
 $$\varphi_w:g\mapsto\Psi(\pi(g)w)\,\,.$$
The closed subspace generated by the functions $\varphi_w\chi$, where
$w$ varies in $V_\pi$ and $\chi$ varies in $X$,
is the space of a subrepresentation $\rho'$ of $\rho$, generated by a set of subrepresentations isomorphic to
 $\pi\otimes\chi$.
 Let  $f$ be a  function from $A$ to $V_\sigma$
 that belongs to the orthogonal $\rho''$ of  $\rho'$. 
We have to show that $f=0$. Let us denote by 
 $<\varphi,f>_\rho$ the scalar product of two functions $\varphi$ and $f$ in the space of $\rho$.
 By hypothesis
$$<\varphi_w{\chi},f>_\rho=\int_{C} {\chi(g)}<\varphi_w(g),f(g)>_\sigma\,d{\dot g}=0$$
for all $w\in V_\pi$ and all $\chi\in X$.
This implies that $ <\varphi_w(g),f(g)>_\sigma=0$ for almost all $\dot g\in C$ and all $w \in V_\pi$.
Now $w\mapsto\varphi_w(g)$ is an intertwining operator $\Psi^g$ between $\pi\vert_B$ and $\sigma^g$,
a representation of $B$ in $V_\sigma$
which is irreducible;  the image of $\Psi^g$ equals $V_\sigma$  and necessarily $f(g)=0$ for almost all $g$.
  \end{proof}
  
  \subsection{A second finiteness assumption}
  
We denote   by $A(\sigma)$  the subgroup of $A$
(containing $B$) of $g\in A$ such that $\sigma^g\simeq\sigma$ and by 
 $X(\pi)$ the subgroup of $\chi\in X$ such that $\pi\otimes\chi\simeq\pi$.
We shall now moreover assume that $X(\pi)$ is finite.

\begin{proposition}\label{cliffa}
Let $\pi$ be an irreducible unitary representation  of $A$ such that $X(\pi)$ is finite.
 Its restriction $\pi\vert_{ B}$  is a finite direct sum of
irreducible unitary representations of $B$. Let $\sigma$ be an irreducible constituent of $\pi\vert_{ B}$.
The vector space  $$V= \Hom_B(\pi\vert_B,\sigma)$$
 is  of finite dimension, say $m$.
All other constituents
are conjugates under $A$ of $\sigma$ and
$$\pi\vert_{ B}\simeq \bigoplus _{\dot g\in A/A(\sigma)} V\otimes\sigma^g$$
where $B$ acts trivially on $V$.
The algebra $\mathcal I(\pi)$, of intertwining operators for $\pi$ restricted to $B$, has a basis indexed
by $X(\pi)$ and
 $$\mathrm{dim}(\mathcal I(\pi))=\card(X(\pi))=m^2\times\card(A/A(\sigma)) \,\,.$$
\end{proposition}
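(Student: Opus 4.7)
The plan is to prove the proposition in two stages: first to establish $\dim \mathcal{I}(\pi) = \card(X(\pi))$ through a Fourier-type argument on the compact abelian quotient $C = B\bs A$, and then to exploit the resulting finite-dimensionality of $\mathcal{I}(\pi)$ to extract a finite orthogonal decomposition of $\pi\vert_B$ into irreducible $B$-representations.

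For the first stage, I would note that for each $\chi \in X(\pi)$, Schur's lemma applied to the irreducible $\pi$ produces a unitary operator $T_\chi$ on $V_\pi$, unique up to a scalar, satisfying $T_\chi \pi(g) = \chi(g) \pi(g) T_\chi$ for every $g \in A$. Since $\chi$ is trivial on $B$, $T_\chi$ lies in $\mathcal{I}(\pi)$. Conjugation by $\pi(A)$ on $\mathcal{I}(\pi)$ descends to an action of $C$ (the action of $B$ being trivial by definition of $\mathcal{I}(\pi)$), and a short computation shows $T_\chi$ is a $\chi^{-1}$-eigenvector. Conversely, any non-zero $\eta$-eigenvector is an intertwiner $\pi \to \pi \otimes \eta^{-1}$, forcing $\eta^{-1} \in X(\pi)$ by Schur and showing each eigenspace to be at most one-dimensional. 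Applying Peter--Weyl to the $C$-action on $\mathcal{I}(\pi)$ (strongly continuous in the strong operator topology) together with the finiteness of $X(\pi)$, I would conclude $\mathcal{I}(\pi) = \bigoplus_{\chi \in X(\pi)} \CM\, T_\chi$, which gives both the basis claim and the dimension count.

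Knowing $\mathcal{I}(\pi)$ is finite-dimensional, the structure theory of finite-dimensional von Neumann algebras applies: the minimal central idempotents decompose $V_\pi$ into a finite orthogonal sum of $B$-stable blocks on each of which $\mathcal{I}(\pi)$ acts as a full matrix algebra $M_n(\CM)$. Each block factors as $W_i \otimes V_i$ with $W_i$ finite-dimensional, $\mathcal{I}(\pi)$ acting via $\End(W_i) \otimes 1$; the commutant $1 \otimes \mathcal{B}(V_i)$ of this matrix algebra in the block then contains $\pi(B)$ and, being generated by it via the double-commutant theorem, shows by Schur that $V_i$ is topologically irreducible. This yields a finite orthogonal Hilbert-sum decomposition of $\pi\vert_B$ into irreducible $B$-representations.

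To finish, I would identify the constituents using the basic relation $\pi(b)\pi(g) = \pi(g)\pi(g^{-1}bg)$, which shows that $\pi(g)$ sends the $\sigma$-isotypic subspace to the $\sigma^{g^{-1}}$-isotypic one. Irreducibility of $\pi$ under $A$ forces the $A$-action on the set of isotypic types to be transitive, so the constituents are precisely the $A$-conjugates $\sigma^g$, parameterized by $A/A(\sigma)$ (well-defined since $A(\sigma) \supset B$ corresponds to a subgroup of the abelian $C$, hence is normal in $A$). The unitary $B$-equivariant isomorphisms induced by the $\pi(g)$ force all multiplicity spaces $W_i$ to share a common dimension $m = \dim V$ with $V = \Hom_B(\pi\vert_B, \sigma)$, giving the stated decomposition $\pi\vert_B \simeq \bigoplus_{\dot g \in A/A(\sigma)} V \otimes \sigma^g$. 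Finally, $\mathcal{I}(\pi) \simeq \bigoplus_{\dot g \in A/A(\sigma)} M_m(\CM)$ has dimension $m^2 \cdot \card(A/A(\sigma))$, which combined with $\dim \mathcal{I}(\pi) = \card X(\pi)$ from the first stage yields the multiplicity identity. The main technical hurdle is the Peter--Weyl step, where the strong operator topology on $\mathcal{I}(\pi)$ must be carefully exploited to guarantee that the algebraic span of the character eigenspaces exhausts the whole algebra.
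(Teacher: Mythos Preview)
Your proposal is correct and follows essentially the same strategy as the paper. The first stage---computing $\dim\mathcal I(\pi)=\card X(\pi)$ by decomposing the conjugation action of $C$ on $\mathcal I(\pi)$ into characters---is exactly what the paper does, though the paper phrases it concretely via the Fourier integral
\[
I_\chi=\int_{C}\overline{\chi(g)}\,\pi(g)^{-1}I\,\pi(g)\,d\dot g
\]
and ordinary Fourier inversion (valid here because all but finitely many coefficients vanish), rather than invoking Peter--Weyl on a non-Hilbert representation space; this sidesteps precisely the ``technical hurdle'' you flagged, so you may prefer to adopt that formulation.

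The main difference lies in the second stage. The paper is quite terse: it takes the existence of an irreducible constituent $\sigma$ essentially for granted, observes that the span of the isotypic components of the $A$-conjugates of $\sigma$ is $A$-stable and hence all of $V_\pi$, and reads off the remaining assertions from the already-established finiteness of $\dim\mathcal I(\pi)$. Your argument is more self-contained: you deduce the discrete decomposition of $\pi\vert_B$ directly from the finite-dimensionality of $\mathcal I(\pi)$ via the structure theory of finite-dimensional von~Neumann algebras (central idempotents, matrix blocks, double commutant), and only afterwards identify the isotypic pieces as $A$-conjugates. This buys you a clean justification of why $\pi\vert_B$ has any irreducible subrepresentation at all---a point the paper leaves implicit---at the cost of importing slightly heavier machinery.
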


\begin{proof}  For $\chi\in X(\pi)$
choose  a non-trivial intertwining operator $U_\chi$ between $\pi$ and $\pi\otimes\chi$.
 According to Schur's lemma, the operator $U_\chi$ is well defined up to a scalar.
Consider $I\in\mathcal I(\pi)$ and $\chi\in X$, then the operator
$$I_\chi=\int_{\dot g\in C}\overline{\chi(g)}.\pi(g)^{-1} I\pi(g)\,\,d{\dot g}$$
 is a scalar multiple of $U_\chi$ for $\chi\in X(\pi)$ and is zero if $\chi\notin X(\pi)$. 
 Fourier inversion shows that
$$I=\sum_{\chi\in X(\pi)} I_\chi=\sum_{\chi\in X(\pi)} c_\chi U_\chi$$ with $c_\chi\in\CM$.
This implies $\mathrm{dim}(\mathcal I(\pi))=\card(X(\pi))$. 
By assumption $\sigma$ is an irreducible constituent of $\pi\vert_{ B}$.
The closed subspace generated by the isotypic components of
$\sigma$ and its $A$-conjugates  is an $A$-invariant subspace
of $V_\pi$, equal to $V_\pi$ since $\pi$ is irreducible.  Hence  $\pi\vert_{ B}$ 
is isomorphic to a finite sum of irreducible representations of $B$ that are $A$-conjugates 
of $\sigma$. 
\end{proof}

The group  $X(\pi)$ is of course finite when $C$ is finite but there are many other instances of it, in particular
  when dealing with admissible representations (see \ref{fini} and \ref{finiadele} below).

One should note that the algebra $\mathcal I(\pi)$ may not be isomorphic to the group algebra $\CM[X(\pi)]$. 
This is the case when $m\ge2$.
An example occurs in the study of inner forms of $SL(2)$ (cf. \cite{LL}) where one may have 
$A(\sigma)=A$ while $X(\pi)$ is an abelian group
of order 4 but $m=2$ and $\mathcal I(\pi)=M(2,\CM)$ the algebra of $2\times2$ matrices.
Further examples are given in \cite{HS}.

Consider the  subgroup $B(\pi)$ of $g\in A$
such that $\chi(g)=1$ for all $\chi\in X(\pi)$. If $X(\pi)$ is finite,  $B(\pi)$ is of  index $\card(X(\pi))$ in $A$
and we have  the following inclusions
$$B\subset B(\pi)\subset A(\sigma)\subset A\,\,.$$

\begin{corollary} If $X(\pi)$ is finite the representation $\sigma$ of $B$ can be extended to a representation
$\tilde\sigma$ of $B(\pi)$ in the same space.
\end{corollary}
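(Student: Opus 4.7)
The plan is to construct $\tilde\sigma$ inside the $\sigma$-isotypic subspace $W\subset V_\pi$ of $\pi|_B$. First, since the isotypic projection $P$ onto $W$ lies in the centre of $\mathcal I(\pi)=\mathrm{End}_B(V_\pi)$, and since conjugation by $\pi(g)$ carries the $\sigma$-isotypic component to the $\sigma^{g^{-1}}$-isotypic one, the inclusion $B(\pi)\subset A(\sigma)$ guarantees that $W$ is stable under $\pi|_{B(\pi)}$. Next, I would fix a $B$-equivariant identification $W\cong V\otimes V_\sigma$, with $B$ acting trivially on $V$ and through $\sigma$ on $V_\sigma$. For each $g\in B(\pi)$, choosing (thanks to $g\in A(\sigma)$) a unitary $T_g\in U(V_\sigma)$ with $T_g\sigma(b)T_g^{-1}=\sigma(gbg^{-1})$, the identity $\pi(g)\pi(b)\pi(g)^{-1}=\pi(gbg^{-1})$ on $W$ shows that $(I_V\otimes T_g^{-1})\pi(g)|_W$ commutes with the full action $I_V\otimes\sigma(b)$, so it lies in $\mathrm{End}(V)\otimes I$. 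Hence $\pi(g)|_W=U(g)\otimes T_g$ for some $U(g)\in\mathrm{End}(V)$.

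The key difficulty is to promote this a priori projective data to a genuine extension, i.e.\ to show that $U(g)$ is always a scalar. For this I would use the operators $U_\chi$, $\chi\in X(\pi)$, which form a basis of $\mathcal I(\pi)$ by Proposition \ref{cliffa}. Each $U_\chi$ preserves the $B$-isotypic components, so $U_\chi|_W$ commutes with $I_V\otimes\sigma$ and has the form $R_\chi\otimes I$ for some $R_\chi\in\mathrm{End}(V)$. The relation $\pi(g)^{-1}U_\chi\pi(g)=\chi(g)U_\chi$ degenerates for $g\in B(\pi)$ to $[\pi(g),U_\chi]=0$, which on $W$ becomes $[U(g),R_\chi]=0$. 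Since the restriction $\mathcal I(\pi)\to\mathrm{End}_B(W)=\mathrm{End}(V)$ is just the projection onto the block of $\mathcal I(\pi)$ attached by Proposition \ref{cliffa} to the isotypic component $W$, it is surjective; thus $\{R_\chi\}_{\chi\in X(\pi)}$ spans the full matrix algebra $\mathrm{End}(V)$, and $U(g)$ must therefore be a scalar $u(g)I_V$.

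Finally, rescaling $T_g$ to $u(g)T_g$ (still an intertwiner between $\sigma$ and $\sigma^g$), I would obtain $\pi(g)|_W=I_V\otimes\tilde\sigma(g)$ with $\tilde\sigma(g):=u(g)T_g\in U(V_\sigma)$. Because $\pi|_{B(\pi)}$ is a genuine strongly continuous unitary representation on $W$, the induced map $\tilde\sigma:B(\pi)\to U(V_\sigma)$ is automatically a strongly continuous unitary representation, and the relation $\pi(b)|_W=I_V\otimes\sigma(b)$ for $b\in B$ yields $\tilde\sigma|_B=\sigma$. The main obstacle is really Step two, where the Schur-type cocycle governing $T_g$ must be killed; the dimension count of Proposition \ref{cliffa}, which forces the $R_\chi$ to span a full matrix algebra, is precisely the tool that does this.
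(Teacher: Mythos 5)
Your proof is correct, but it takes a more constructive route than the paper's. The paper's argument is a two-line dimension count: applying Proposition~\ref{cliffa} to both pairs $(A,B)$ and $(A,B(\pi))$, and noting that the characters of $B(\pi)\backslash A$ fixing $\pi$ are exactly the elements of $X(\pi)$ (all of which are trivial on $B(\pi)$ by definition), one finds that the intertwining algebras of $\pi\vert_B$ and $\pi\vert_{B(\pi)}$ have the same dimension $\card(X(\pi))$; since one is contained in the other they coincide, so irreducible constituents of $\pi\vert_{B(\pi)}$ stay irreducible over $B$, and the one lying over $\sigma$ is the desired $\tilde\sigma$. You instead apply Proposition~\ref{cliffa} only to $(A,B)$ and then argue directly on the $\sigma$-isotypic component $W\cong V\otimes V_\sigma$: you exhibit $\pi(g)\vert_W$ for $g\in B(\pi)$ as $U(g)\otimes T_g$ and kill the projective ambiguity by observing that the images $R_\chi$ of the $U_\chi$ under the (surjective) block projection $\mathcal I(\pi)\to\mathrm{End}(V)$ span the full matrix algebra, while $[\pi(g),U_\chi]=0$ for $g\in B(\pi)$ forces $U(g)$ to be scalar. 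In effect you are proving $\pi(B(\pi))\subset\mathcal I(\pi)'$ by hand, which is the same underlying fact the paper extracts from the equality of commutants, but your version is explicit where the paper's is abstract: it produces $\tilde\sigma$ concretely on $V_\sigma$ (so the phrase ``in the same space'' is transparent) at the cost of the extra bookkeeping with $T_g$, $u(g)$ and the well-definedness of $u(g)T_g$ (which is automatic, since $\pi(g)\vert_W = I\otimes S$ determines $S$ uniquely). Both arguments rest on the same input, namely that the $U_\chi$, $\chi\in X(\pi)$, form a basis of $\mathcal I(\pi)$ and are all fixed by conjugation by $\pi(B(\pi))$.
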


\begin{proof}
 Proposition~\ref{cliffa} applied to the pairs $(A,B)$ and $(A,B(\pi))$ 
tells us that the dimension of the intertwining algebra for $\pi\vert_{B(\pi)}$ and $\pi\vert_B$ are both equal
to $\card(X(\pi))$
and hence the irreducible constituents of $\pi\vert_{B(\pi)}$ remain irreducible when
restricted to $B$. \end{proof}

\begin{proposition} \label{frobenius} Let $B \subset A$ be a pair of a locally compact groups where 
$B$ is a closed invariant subgroup of $A$ such that $B\bs A$  is a compact abelian group. 
 Let $\pi$ and $\sigma$ be irreducible unitary representations of $A$ and  $B$ respectively.
Assume that the group $X(\pi)$, of characters of $B\bs A$ such that $\pi\otimes\chi\simeq\pi$,
 is finite. Then Frobenius reciprocity holds: the natural map
$$Frob:\Hom_B(\pi\vert_B,\sigma)\to\Hom_A(\pi,\mathrm{Ind}_B^A\sigma)\,\,.$$
is an isomorphism. 
\end{proposition}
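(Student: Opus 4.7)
The injectivity of $Frob$ is immediate from Lemma \ref{frob}, whose finite-volume hypothesis holds since $C = B \backslash A$ is compact. For surjectivity, given $\Phi \in \Hom_A(\pi, \mathrm{Ind}_B^A \sigma)$ I would attempt to recover $\Psi \in \Hom_B(\pi|_B, \sigma)$ via evaluation at the identity, $\Psi(w) := \Phi(w)(e)$. If this makes sense, the $A$-equivariance of $\Phi$ gives $\Phi(w)(g) = \Phi(\pi(g)w)(e) = \Psi(\pi(g)w)$, yielding $Frob(\Psi) = \Phi$; and the transformation rule $f(bg) = \sigma(b) f(g)$ for $f \in V_\rho$ furnishes the $B$-equivariance of $\Psi$.

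The obstruction is that elements of $V_\rho$ are equivalence classes of measurable functions, so pointwise evaluation at $e$ is not intrinsically defined. I would resolve this by combining Proposition \ref{cliffa} with a regularization argument. By Proposition \ref{cliffa}, any prospective $\Psi$ factors through the $\sigma$-isotypic component $V^\sigma \subset V_\pi$, which is isomorphic to $V \otimes V_\sigma$ with multiplicity space $V$ of finite dimension $m$; this collapses the construction of $\Psi$ to a finite-multiplicity problem. For a test function $\alpha \in C_c(A)$ of integral $1$, the averaged vector $\pi(\alpha) w$ has image $\rho(\alpha)\Phi(w)$, represented by the continuous function $g \mapsto \int_A \alpha(a)\Phi(w)(ga)\, da$, whose value at $e$ is a legitimate element of $V_\sigma$. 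Letting $\alpha$ concentrate at $e$ and exploiting the finite-dimensional structure on the $\sigma$-isotypic piece of $\Phi(V_\pi)$ supplied by Proposition \ref{cliffa}, one passes to the limit to obtain $\Psi(w) \in V_\sigma$; the dimension count $\dim \Hom_B(\pi|_B, \sigma) = m$ from Proposition \ref{cliffa} ensures consistency.

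The main obstacle is justifying convergence of this limit: pointwise limits of $L^2$-classes are not controlled in general, and one must use the finite-multiplicity structure from Proposition \ref{cliffa} to restrict attention to a dense subspace of continuous representatives within the $\sigma$-isotypic part of $\Phi(V_\pi)$, where evaluation at $e$ is bounded. Once the limit is shown to exist, the $B$-equivariance of $\Psi$ follows from the functional equation obeyed by elements of $V_\rho$, and the equality $Frob(\Psi) = \Phi$ is a direct consequence of the $A$-equivariance of $\Phi$ together with the defining formula $\Psi(w) = \Phi(w)(e)$.
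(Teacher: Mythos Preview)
Your approach diverges from the paper's and, as you yourself flag, leaves the central step unfinished. The regularization scheme you sketch runs into two concrete problems. First, the smoothed operator $\Psi_\alpha(w):=(\rho(\alpha)\Phi(w))(e)$ is \emph{not} $B$-equivariant: one has $\Psi_\alpha(\pi(b)w)=\int_A\alpha(a)\Phi(w)(ab)\,da$, whereas $\sigma(b)\Psi_\alpha(w)=\int_A\alpha(a)\Phi(w)(ba)\,da$, and these differ unless $\alpha$ satisfies a strong invariance condition. Second, even on the G{\aa}rding subspace of $V_\pi$ where $\Phi(w)$ has a continuous representative and $\Psi(w)=\Phi(w)(e)$ is well defined, you give no reason why the linear functional $w\mapsto\Phi(w)(e)$ should be bounded, hence extend continuously to all of $V_\pi$. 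Invoking the finite dimension $m$ of the multiplicity space from Proposition~\ref{cliffa} does not by itself supply a bound on evaluation at $e$: that dimension count lives on the $B$-side, not on the induced side where the analytic difficulty sits.

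The paper sidesteps this analytic obstacle entirely by inserting the intermediate subgroup $B(\pi)=\bigcap_{\chi\in X(\pi)}\ker\chi$, which has \emph{finite} index in $A$ because $X(\pi)$ is finite. Two purely algebraic reductions then suffice: (i) since $A/B(\pi)$ is finite, elements of $\mathrm{Ind}_{B(\pi)}^A\tilde\sigma$ are functions on a finite coset space, so evaluation at the identity is perfectly legitimate and yields classical Frobenius reciprocity between $\pi$ and $\tilde\sigma$; (ii) induction in stages, together with the observation that among the twists $\tilde\sigma\otimes\nu$ only those with $\nu$ coming from $X(\pi)$ can contribute to $\Hom_A(\pi,\cdot)$, collapses $\Hom_A(\pi,\mathrm{Ind}_B^A\sigma)$ back to $\Hom_A(\pi,\mathrm{Ind}_{B(\pi)}^A\tilde\sigma)$. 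The corollary preceding the proposition (that $\sigma$ extends to $\tilde\sigma$ on $B(\pi)$, with irreducible constituents of $\pi\vert_{B(\pi)}$ remaining irreducible on $B$) then identifies $\Hom_{B(\pi)}(\pi\vert_{B(\pi)},\tilde\sigma)$ with $\Hom_B(\pi\vert_B,\sigma)$. No limiting procedure or boundedness estimate is needed.
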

\begin{proof}
We have seen that $\sigma$ can be extended to a representation  $\tilde\sigma$ of $B(\pi)$.
Since $A/B(\pi)$ is finite all functions in the space of $\Ind_{B(\pi)}^A\tilde\sigma$ are continuous
and  evaluation at the origin yields
the Frobenius reciprocity, i.e.\! the following map is a bijection:
$$\Hom_{B(\pi)}(\pi\vert_{B(\pi)},\tilde\sigma)\to\Hom_A(\pi,\Ind_{B(\pi)}^A\tilde\sigma)\,\,.
\leqno{(a)}$$
On the other hand,  there is an isomorphism
$$\Hom_{B}(\pi\vert_{B},\sigma)\to\Hom_{B(\pi)}(\pi\vert_{B(\pi)},\tilde\sigma)\,\,.\leqno{(b)}
$$
Now $\tilde\sigma$ injects in $\Ind_B^{B(\pi)}\sigma$: in fact
the map
$$w\mapsto f_w\qquad \hbox{with}\qquad f_w(x)=\tilde\sigma(x)w$$
is an intertwining operator since
$$\rho(y)f_w(x)=f_w(xy)=
\tilde\sigma(xy)w=f_{\tilde\sigma(y)w}(x),$$
and it follows form \ref{cliffb} that
$\Ind_B^{B(\pi)}\sigma$ is the Hilbert direct sum of the $\tilde\sigma\otimes\nu$
where $\nu$ runs over characters of $B(\pi)/B$ while $\Ind_{B(\pi)}^A\tilde\sigma$
is a multiple of $\pi$. This implies that 
$$\Hom_A(\pi,\Ind_{B(\pi)}^A(\tilde\sigma\otimes\nu))=\Hom_A(\pi,(\Ind_{B(\pi)}^A\tilde\sigma)\otimes\chi)=0\leqno{(c)}$$
unless $\chi$, which is any extension of $\nu$ to $A$,
belongs to $X(\pi)$.
Now induction by stages shows that
$$\Ind_{B}^A\,\sigma=\widehat\bigoplus_\nu\Ind_{B(\pi)}^A(\tilde\sigma\otimes\nu)$$
where $\nu$ runs over characters of $B(\pi)/B$ and (c) implies
$$\Hom_A(\pi,\rho)=\Hom_A(\pi,\Ind_{B}^A\,\sigma)=\Hom_A(\pi,\Ind_{B(\pi)}^A\tilde\sigma)\leqno{(d)}.$$
In view of (a), (b) and (d)
the proof is complete.
\end{proof}

\section{The groups in question}\label{setting}

Let $k$ be a  field. 
Let $\H$ and $\G$ be  two 
connected algebraic groups over $k$ with a morphism $\f:\H\to\G\,\,.$
Let $\ZG$ denote the center of $\G$ and $\ZH$ the center of $\H$.  
Let $\Z$ be the connected component of $\ZG$; this is a torus.

\subsection{Some crossed modules}
We shall assume that the natural morphism $\Z\times\H\to\G$
 is a central map which means that it is surjective and its kernel an abelian group scheme 
in the center (see Appendix A).
 This is equivalent to ask that the morphism induced between the derived subgroups 
 $$\f_{der}:\H_{der}\to\G_{der}$$
 is a central isogeny. 
 This is also equivalent to asking that the induced map 
  $$\f_{ad}:\H_{ad}\to\G_{ad}$$ between the adjoint groups is an isomorphism.
 The last isomorphism shows that $\G$ acts on $\H$ by conjugacy
 and this implies that the complex $[H\to\G]$ is a crossed-module. 
 We refer the reader to \cite[Chap.~1]{LBC} or \cite[Appendix~B]{Mi}
 for this concept.
 The particular case where $\H=\G_{sc}$  is the simply connected cover of the derived group has been extensively 
studied in \cite{LBC}.

 \begin{lemma} \label{quasi} Let   $\TG$ be  a maximal torus in $\G$ and let $\TH$ be its inverse image in $\H$.
 The map between complexes $$[\TH\to\TG]\to[H\to\G]$$
 induces a quasi-isomorphism between complexes of points over the separable closure.
 \end{lemma}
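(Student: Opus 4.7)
The plan is to verify the quasi-isomorphism by computing $H^{-1}$ (kernel) and $H^{0}$ (cokernel) of each two-term complex on $k_s$-points, and checking that the natural map induces isomorphisms on both.

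First, I would argue that $\ker(f)$ sits inside $T^H$. Since $f_{ad}\colon H_{ad}\to G_{ad}$ is an isomorphism, the composition $H\xrightarrow{f}G\to G_{ad}$ coincides with $H\to H_{ad}$, so $\ker(f)\subseteq \ker(H\to H_{ad})=Z^H$. A centralizer argument then places $f(Z^H)$ in $Z^G$: an element of $f(Z^H)$ centralizes $f(H)$, and since $G=Z\cdot f(H)$ (from the surjectivity of the central map $Z\times H\to G$) while $Z$ is already central, such an element centralizes all of $G$. Hence $Z^H\subseteq f^{-1}(Z^G)\subseteq f^{-1}(T^G)=T^H$, and in particular $\ker(f)\subseteq T^H$. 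Consequently
$$\ker\bigl(T^H(k_s)\to T^G(k_s)\bigr)=\ker(f)(k_s)=\ker\bigl(H(k_s)\to G(k_s)\bigr),$$
identifying $H^{-1}$ of the two complexes.

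Next, for $H^0$, the map $T^G\hookrightarrow G$ induces $T^G(k_s)/f(T^H(k_s))\to G(k_s)/f(H(k_s))$. For injectivity, if $t\in T^G(k_s)$ equals $f(h)$ for some $h\in H(k_s)$, the condition $f(h)\in T^G$ forces $h\in f^{-1}(T^G)(k_s)=T^H(k_s)$, so $t\in f(T^H(k_s))$. For surjectivity, observe that the central map hypothesis yields $G=Z\cdot f(H)\subseteq T^G\cdot f(H)$ as algebraic groups, so the morphism $T^G\times H\to G$, $(t,h)\mapsto t\cdot f(h)$, is surjective with kernel the image of $T^H$ under $h\mapsto(f(h)^{-1},h)$, giving the exact sequence
$$1\to T^H\to T^G\times H\to G\to 1.$$
Over the separably closed field $k_s$ this produces the desired surjectivity on points once $H^1(k_s,T^H)$ is seen to vanish.

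The main obstacle will be precisely this last vanishing in positive characteristic, since $T^H=f^{-1}(T^G)$ may fail to be smooth when $\ker(f_{der})$ is a non-smooth central subgroup scheme (such as $\mu_p$). In characteristic zero it is immediate; in general I would reduce to an fppf-cohomology computation, using the short exact sequence $1\to\ker(f)\to T^H\to T^G\cap f(H)\to 1$ together with the fact that $\ker(f)$ and $T^G\cap f(H)$ are diagonalizable group schemes, and invoking either the structural input of Section \ref{setting} or Lemaire's appendix to control the fppf $H^1$ over $k_s$. Once this vanishing is in hand, the isomorphism of $H^0$'s is established and the proof of the quasi-isomorphism is complete.
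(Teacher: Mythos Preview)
Your argument is on the right track, but the obstacle you identify is a phantom: the inverse image $T^H=f^{-1}(T^G)$ is in fact a maximal torus of $H$, hence smooth, and $H^1(k_s,T^H)=0$ is immediate since any torus splits over $k_s$. To see this, note that $f^{-1}(T^G)$ equals the preimage of the maximal torus $T^G_{ad}\subset G_{ad}\cong H_{ad}$ under the central quotient $H\to H_{ad}$, and the preimage of a maximal torus under a central quotient of reductive groups is always a maximal torus (the schematic center lies in every maximal torus, even in positive characteristic). So your detour through fppf cohomology of a possibly non-smooth $T^H$ is unnecessary. A smaller technical point: the map $T^G\times H\to G$, $(t,h)\mapsto t\,f(h)$, is not a group homomorphism since $T^G$ is not central in $G$, so the displayed ``exact sequence'' is not one of groups. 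What you actually have is that this map exhibits $T^G\times H$ as a $T^H$-torsor over $G$ for the free action $s\cdot(t,h)=(t\,f(s),\,s^{-1}h)$; since $T^H$ is smooth, this torsor is \'etale-locally trivial and the vanishing of $H^1_{\mathrm{\acute{e}t}}(k_s,T^H)$ gives surjectivity on $k_s$-points. With these two corrections your argument goes through.

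The paper proceeds quite differently and more directly: over $k_s$ the groups split, and by \cite[Th\'eor\`eme~(2.20)]{BT} the morphism $f$ induces isomorphisms on unipotent radicals of Borel subgroups and on Weyl groups. The Bruhat decomposition then expresses every element of $G(k_s)$ as a product of elements of $U(k_s)$, a Weyl representative, and an element of $T^G(k_s)$; since the unipotent and Weyl parts all lie in $f(H(k_s))$, both the kernel and cokernel of $f$ on $k_s$-points are computed entirely on the tori. This avoids any torsor or cohomology bookkeeping.
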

 
 \begin{proof} Let $k^{sep}$ denote the separable closure of $k$. We want to prove that
 $$[\TH(k^{sep})\to\TG(k^{sep})]\to[H(k^{sep})\to\G(k^{sep})]$$
 is a quasi-isomorphism. In particular we need to compute the kernel and cokernel of the map $\f^{sep}$.
But then we are dealing with split groups and split tori.
Since the unipotent subgroups and Weyl groups
are isomorphic  \cite[Th\'eor\`eme (2.20), page 260]{BT} and using Bruhat decomposition,
we are left with the kernel and cokernel of the induced map between the tori.
  \end{proof}

 \subsection{Crossed modules over local  fields}\label{crossl}
 
 In this subsection  $F$ is a  local field; by this we mean
archimedean or non archimedean local field as well.  
As a convention for the Galois cohomology with values in complexes $[B\to A]$ we take $A$ in degree 0.
  This is the convention used in \cite{LBC}.

We denote by $\Hplus$ the subgroup of $\G(F)$ generated by $\f(\H(F))$ and $ \ZG(F)$. 
 The reader is warned that although  $\Hplus$ is a Lie group when $F$ is archimedean
and a totally disconnected group when $F$ is non archimedean,
it is not in general the group of points of an algebraic  reductive group over $F$.

\begin{proposition}\label{coloc} The group
$\Hplus$ is an invariant subgroup in $\G(F)$. The quotient $G(F)/\Hplus$  is  abelian and compact; 
it is even finite for local fields of characteristic zero.  
\end{proposition}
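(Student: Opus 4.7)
The plan is to realize $\Hplus$ as the image on $F$-points of the algebraic-group morphism
\[
\varphi: \Z \times \H \longrightarrow \G, \qquad (z,h) \longmapsto z\cdot f(h),
\]
and to read off the required properties from the (flat) cohomology of its kernel. By the hypothesis that $\Z\times\H\to\G$ is a central map, $K=\ker\varphi$ is a central subgroup scheme of $\Z\times\H$; being central in the reductive group $\Z\times\H$, $K$ is of multiplicative type. At the level of $F$-points, $\varphi((\Z\times\H)(F))=\Z(F)\cdot f(\H(F))=\Hplus$.

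For the normality of $\Hplus$ in $\G(F)$, I would use the standard Galois-descent argument available whenever the kernel is central. Given $g\in\G(F)$, pick a lift $\tilde g\in(\Z\times\H)(F^{sep})$ with $\varphi(\tilde g)=g$; for $\sigma\in\mathrm{Gal}(F^{sep}/F)$ write $\sigma(\tilde g)=\tilde g\cdot k_\sigma$ with $k_\sigma\in K(F^{sep})$. For any $a\in(\Z\times\H)(F)$, the centrality of $K$ yields
\[
\sigma(\tilde g\,a\,\tilde g\mun)=\tilde g\,k_\sigma\,a\,k_\sigma\mun\,\tilde g\mun=\tilde g\,a\,\tilde g\mun,
\]
so $\tilde g\,a\,\tilde g\mun\in(\Z\times\H)(F)$ and $g\varphi(a)g\mun=\varphi(\tilde g\,a\,\tilde g\mun)\in\Hplus$.

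The fppf long exact sequence attached to $1\to K\to\Z\times\H\xrightarrow{\varphi}\G\to 1$,
\[
1\to K(F)\to(\Z\times\H)(F)\xrightarrow{\varphi}\G(F)\xrightarrow{\delta}H^1(F,K)\to H^1(F,\Z\times\H),
\]
identifies $\G(F)/\Hplus$ via the connecting map $\delta$ with the kernel of the last arrow. Since $K$ is commutative, $\delta$ is a continuous homomorphism into the abelian topological group $H^1(F,K)$; hence $\G(F)/\Hplus$ is abelian and sits as a closed subgroup of $H^1(F,K)$.

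It thus suffices to prove that $H^1(F,K)$ is compact, and finite when $\mathrm{char}(F)=0$. Fit $K$ into $1\to K^0\to K\to\pi_0(K)\to 1$ with $K^0$ a torus and $\pi_0(K)$ a finite group scheme of multiplicative type. By Tate--Nakayama, $H^1(F,K^0)$ is finite in any characteristic. The group $H^1(F,\pi_0(K))$ is finite when $\pi_0(K)$ is \'etale, which is automatic in characteristic zero, and compact in general. The associated long exact sequence then delivers the required compactness (resp.\ finiteness) for $H^1(F,K)$. The main technical obstacle arises in characteristic $p$, where one must establish compactness of $H^1(F,\mu_{p^n})$: this follows from the Kummer sequence identifying it with $F^\times/F^{\times p^n}$, a compact quotient of $F^\times$ by a closed subgroup.
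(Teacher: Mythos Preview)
Your approach via the fppf cohomology of the central kernel $K=\ker(\Z\times\H\to\G)$ is genuinely different from the paper's two arguments: the main text reduces to tori through the quasi-isomorphism of crossed modules $[\TH\to\TG]\to[\H\to\G]$ (Lemma~\ref{quasi}) and the attached exact sequences for $\coh^0$, while the appendix uses the Borel--Tits commutator lift $\kappa:\G\times\G\to\H$ together with the Iwasawa decomposition. In characteristic zero your proof is correct and pleasantly direct: $K$ is then \'etale, the Galois-descent normality argument is valid, $K^0$ really is a torus, and all the relevant $H^1$ groups are finite.

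In characteristic $p$, however, two steps fail. First, your normality argument requires lifting $g\in\G(F)$ to $(\Z\times\H)(F^{sep})$, i.e.\ surjectivity of $\varphi$ on $F^{sep}$-points; this is false whenever $K$ has an infinitesimal factor such as $\mu_p$, since $F^{sep}$ is not perfect. (The appendix sidesteps this via the morphism $\kappa$ of \cite[2.2, 2.6]{BT}, which yields $[\G(F),\G(F)]\subset\varphi((\Z\times\H)(F))$ without any lifting; alternatively one can observe that $\G$ acts on $\Z\times\H$ by $F$-automorphisms through $\G_{ad}\simeq\H_{ad}$.) Second, the identity component of a multiplicative-type group need \emph{not} be a torus in characteristic $p$: already $\mu_p$ is connected. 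So your d\'evissage $1\to K^0\to K\to\pi_0(K)\to 1$ does not have the claimed shape. Filtering instead by the maximal subtorus leaves an arbitrary finite group of multiplicative type as quotient, and your compactness sketch only treats the split piece $\mu_{p^n}$. One can repair this by embedding $K$ in a torus $S$ and using the exact sequence $S'(F)/\mathrm{im}\,S(F)\to H^1(F,K)\to H^1(F,S)$, but then the compactness input is exactly Lemma~\ref{apptor}, which is what the paper's proofs already rest on. (A minor side remark: $\Hplus$ is defined using the full center $\ZG(F)$, not $\Z(F)$, so your displayed equality $\varphi((\Z\times\H)(F))=\Hplus$ is not literal; the discrepancy is finite and harmless for the conclusion.)
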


\begin{proof}   Replacing if necessary $\H$ by $\H\times\Z$ which is again reductive and connected,
we may assume that the map $\f$ is surjective. Then
 it is enough to prove that, in such a case, $\f(\H(F))\bs \G(F)$ is abelian, compact and 
even finite for local fields of characteristic zero. 
Since quasi-isomorphisms between complexes of points on the separable closure compatible
with Galois action
induce isomorphisms in Galois cohomology \cite[Proposition 1.2.2]{LBC},
Lemma \ref{quasi} implies that
 the map
$${\coh}^0(F,[\TH\to\TG])\to {\coh}^0(F,[\H\to\G])$$
is an isomorphism and hence ${\coh}^0(F,[\H\to\G])$ is abelian.
 One has  an exact sequence 
 $$
1\to\f(\H)\bs\G\to\coh^0(F,[\H\to\G])\to\coh^1(F,\H).
$$
In particular  $\f(\H)\bs \G$ is an abelian subgroup of finite index in $\coh^0(F,[\H\to\G])$.
There is an exact sequence
 $$
1\to\f(\TH)\bs\TG\to\coh^0(F,[\TH\to\TG])\to\coh^1(F,\TH)\,\,.
$$
Since  $\coh^1(F,\TH)$ is finite it remain to observe that  $\f(\TH)\bs\TG$ is compact when $\f$ is surjective
 (see for example Lemma~\ref{apptor} in Appendix A). 
It is finite for local fields of characteristic zero 
\end{proof}

For an alternative argument independent of Galois hypercohomology
see \ref{appcomp} in Appendix A.

\begin{remark}  In the case of a central isogeny $\H \to \G$ for groups over a non-archimedean 
local field this result was stated (without proof) and used in \cite{Si}.
\end{remark}

 \subsection{Crossed modules over  global fields}\label{crossg}

In this subsection $F$ is a global field.
  We shall use the notation of \cite{KS} and \cite{LBC} for adelic cohomology. The reader should be aware that
  the degree conventions for hypercohomology of complexes are not the same in
  these references: namely
 $\coh^0(\star,B \to A)$  in \cite{LBC} is  $\coh^1(\star,B\to A)$  in \cite{KS}.  We shall use the
 convention of  \cite{LBC}.

\begin{lemma}\label{co} Assume the morphism $\f: H \to G$ is surjective.
Then $\coh^0(\adef/F,[\H\to\G])$ is compact.
\end{lemma}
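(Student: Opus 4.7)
The plan is to mirror the argument of Proposition~\ref{coloc} in the adelic setting: reduce to maximal tori via Lemma~\ref{quasi}, identify the resulting hypercohomology with the classical first Galois cohomology of a diagonalizable kernel, and conclude by compactness results for adelic cohomology of groups of multiplicative type.

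First, Lemma~\ref{quasi} provides a Galois-equivariant quasi-isomorphism $[\TH \to \TG] \to [\H \to \G]$ on $F^{\mathrm{sep}}$-points. The adelic analogue of \cite[Prop.~1.2.2]{LBC} then gives an isomorphism $\coh^0(\adef/F, [\TH \to \TG]) \xrightarrow{\;\sim\;} \coh^0(\adef/F, [\H \to \G])$, so it suffices to prove the statement for the torus complex. Surjectivity of $\f$ forces surjectivity of $\TH \to \TG$, with kernel $K$ a diagonalizable $F$-group scheme sitting in the centers. The complex $[\TH \to \TG]$ is thus quasi-isomorphic to $K$ placed in degree $-1$, and the hypercohomology spectral sequence yields
$$\coh^0(\adef/F, [\TH \to \TG]) \simeq H^1(\adef/F, K).$$

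Second, I would show that $H^1(\adef/F, K)$ is compact. The long exact sequence attached to $1 \to K \to \TH \to \TG \to 1$ fits this group into
$$0 \to \TG(\adef)/\bigl(\f(\TH(\adef)) \cdot \TG(F)\bigr) \to H^1(\adef/F, K) \to \ker\!\bigl(H^1(\adef/F, \TH) \to H^1(\adef/F, \TG)\bigr) \to 0.$$
The right-hand kernel embeds in $H^1(\adef/F, \TH)$, which is compact (finite in characteristic zero) by the classical theory of Galois cohomology of tori over global fields combined with local--global finiteness of Poitou--Tate type. The left-hand quotient is compact because the isogeny $\f$ on maximal tori induces a finite-index inclusion of cocharacter lattices, so $\f(\TH(\adef))$ absorbs the non-compact ``cone'' direction of $\TG(\adef)$, leaving only the norm-one subgroup $\TG(\adef)^1/\TG(F)$, which is compact by Fujisaki's lemma (finiteness of class number in the number-field case, and its function-field analogue).

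The main obstacle I anticipate is the compactness of the left-hand quotient $\TG(\adef)/\bigl(\f(\TH(\adef)) \cdot \TG(F)\bigr)$: one must justify the cocharacter-lattice argument uniformly in characteristic, and simultaneously handle a kernel $K$ that may contain both a finite part and a subtorus. The authors' remark that an alternative argument independent of Galois hypercohomology appears in Appendix~A (for the local case) suggests that a more direct structural proof, perhaps bypassing adelic hypercohomology in favor of reduction theory, may be available here as well.
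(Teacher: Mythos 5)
Your skeleton is the paper's: reduce to the torus complex via the quasi-isomorphism of Lemma~\ref{quasi}, then use the exact sequence
$$1\to D\to \coh^0(\adef/F,[\TH\to\TG])\to\coh^1(\adef/F,\TH),\qquad D=\TG(\adef)/\f(\TH(\adef))\TG(F),$$
with $D$ compact and $\coh^1(\adef/F,\TH)$ finite. (The paper cites Lemma C.2.D of \cite{KS} for this; your sketch of the compactness of $D$ via cocharacter lattices and Fujisaki is exactly the content of Lemma~\ref{apptor}(2), which the appendix proves uniformly in the characteristic, so that ``obstacle'' is already taken care of.)

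The one step that does not survive scrutiny is the intermediate identification $\coh^0(\adef/F,[\TH\to\TG])\simeq H^1(\adef/F,K)$. For this you need $[\TH\to\TG]$ to be quasi-isomorphic to $K$ placed in degree $-1$ \emph{as complexes of Galois modules}, i.e. you need $\TH(F^{\mathrm{sep}})\to\TG(F^{\mathrm{sep}})$ to be surjective. That fails in positive characteristic whenever the kernel is not smooth: for $t\mapsto t^p$ on $\GM_m$ over ${\mathbb F}_p(t)$ the element $t$ is not a $p$-th power in $F^{\mathrm{sep}}$, and inseparable central morphisms are explicitly allowed in this paper (see the footnote in Appendix A). For the same reason your ``long exact sequence attached to $1\to K\to\TH\to\TG\to 1$'' is the fppf sequence, not a sequence of Galois cohomology of $F^{\mathrm{sep}}$-points. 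Fortunately the detour through $K$ is superfluous: the displayed exact sequence is simply the hypercohomology sequence of the two-term complex itself, coming from the short exact sequence of complexes $0\to\TG\to[\TH\to\TG]\to\TH[1]\to 0$, and requires no surjectivity on separable points. Deleting the passage through $H^1(\adef/F,K)$ and quoting the sequence for the complex directly turns your argument into the paper's proof.
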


\begin{proof}    The quasi-isomorphism $[\TH\to\TG]\to[\H\to\G]$ implies  isomorphisms in cohomology.
Hence it is equivalent to prove that  $$\coh^0(\adef/F,[\TH\to\TG])$$ is compact. However, this 
is one of the statements in Lemma C.2.D, page 153, in  \cite{KS} (up to the  shift in degree explained above).
 Although this reference is written for number fields the proof extends verbatim to the case of arbitrary global fields. Namely,
one has an exact sequence
$$1\to D\to\coh^0(\adef/F,[\TH\to\TG])\to \coh^1(\adef/F,\TH)$$
where $$D=\Coker[\coh^0(\adef/F,\TH)\to\coh^0(\adef/F,\TG)]$$
 is compact  if $\f$ is surjective while $\coh^1(\adef/F,\TH)$ is finite.  
   \end{proof}
  
  \begin{lemma}\label{cor} Assume $\f$ is surjective.
Then $\G(F)\f(\H(\adef))\bs\G(\adef)$ is compact.
\end{lemma}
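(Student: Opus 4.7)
The plan is to parallel the proof of Lemma~\ref{co}. Applied to the crossed module $[\H\to\G]$ instead of $[\TH\to\TG]$, the long exact sequence of ad\`elic hypercohomology (available for crossed modules in the framework of \cite{LBC}) yields
$$\coh^0(\adef/F,\H) \longrightarrow \coh^0(\adef/F,\G) \longrightarrow \coh^0(\adef/F,[\H\to\G]) \longrightarrow \coh^1(\adef/F,\H).$$
Using the standard identifications $\coh^0(\adef/F,\H) = \H(F)\bs\H(\adef)$ and $\coh^0(\adef/F,\G) = \G(F)\bs\G(\adef)$, the cokernel of the first arrow is precisely the double coset space
$$D' := \G(F)\f(\H(\adef))\bs\G(\adef).$$

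Consequently, by exactness, $D'$ embeds into $\coh^0(\adef/F,[\H\to\G])$ as the preimage of the distinguished class of $\coh^1(\adef/F,\H)$ under a continuous map. Since the middle term is compact by Lemma~\ref{co}, and since the distinguished class of $\coh^1(\adef/F,\H)$ is a closed point for the natural topology, $D'$ is a closed subset of a compact Hausdorff space, hence compact.

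The only genuinely delicate step is verifying that the long exact sequence above, together with the required topological properties (continuity of the connecting map and closedness of the base point in $\coh^1(\adef/F,\H)$), remains valid when $\H$ is non-abelian so that $\coh^1$ is only a pointed set. This is standard in the crossed-module formalism of \cite{LBC} and poses no serious obstacle. As a backup, one may appeal to the alternative argument supplied by B.~Lemaire in the appendix to bypass Galois hypercohomology altogether, in the same spirit as the alternative to Proposition~\ref{coloc} mentioned in \ref{appcomp}.
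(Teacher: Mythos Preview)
Your strategy is the same as the paper's---deduce compactness from Lemma~\ref{co}---but your execution takes a shortcut that the paper deliberately avoids. You posit a four-term exact sequence
\[
\coh^0(\adef/F,\H)\longrightarrow\coh^0(\adef/F,\G)\longrightarrow\coh^0(\adef/F,[\H\to\G])\longrightarrow\coh^1(\adef/F,\H)
\]
for the relative $\adef/F$ theory and identify $\coh^0(\adef/F,\G)$ with $\G(F)\bs\G(\adef)$. Neither step is immediately available: the crossed-module long exact sequence in \cite{LBC} is set up for Galois cohomology over a field, not for the relative $\adef/F$ theory, and in the relative theory $\coh^0(\adef/F,\,\cdot\,)$ need not coincide with the naive quotient of ad\`elic by rational points (the obstruction is a $\Ker^1$ term). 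Your final paragraph acknowledges the first issue but dismisses it as ``standard''; the paper does not.

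Instead, the paper performs an explicit diagram chase on the commutative diagram whose rows are the local--global sequences for $\H$, $\G$, $\K=[\H\to\G]$ and whose columns are the crossed-module sequences over $F$ and over $\adef$. From Lemma~\ref{co} together with the \emph{finiteness of} $\Ker^1(F,\K)$ it first obtains compactness of $\Coker[\coh^0(F,\K)\to\coh^0(\adef,\K)]$; then, invoking the \emph{finiteness of} $\Ker^1(F,\H)$ (via \cite{LBC} for number fields and \cite{Con}, \cite{Ha} for function fields), it shows that the image of $\G(\adef)$ in this cokernel agrees with $\G(F)\f(\H(\adef))\bs\G(\adef)$ up to a finite group. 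These two finiteness results are precisely what stand in for your asserted four-term sequence and your ``closed base point'' claim; if you try to justify those directly you will find you need the same inputs. So the approach is essentially the paper's, but the diagram chase and the $\Ker^1$-finiteness are the substance, not a technicality to be waved past.
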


\begin{proof}   Let us denote by $\K$ the complex $[\H\to\G]$.
The following  diagram
$$\begin{matrix}
&&\H(F)&\to &\H(\adef)&\to &\coh^0(\adef/F,\H)&&\cr
&&\downarrow&&\downarrow&&\downarrow&&\cr
&&\G(F)&\to&\G(\adef)&\to&\coh^0(\adef/F,\G)&&\cr
&&\downarrow&&\downarrow&&\downarrow&&\cr
&& \coh^0(F,\K)&\to&\coh^0(\adef,\K)&\to&\coh^0(\adef/F,\K)&\to&\Ker^1(F,\K)\cr
&&\downarrow&&\downarrow&&&&\cr
\Ker^1(F,\H)&\to& \coh^0(F,\H)&\to&\coh^0(\adef,\H)&&&&\cr
\end{matrix}$$
 is commutative with exact lines and columns.
  Now \ref{co} and the finiteness  of $$\Ker^1(F,\K)\simeq \Ker^1(F,[\TH\to\TG])$$
  (cf. \cite{KS})
  imply that 
 $$\Coker[\coh^0(F,[\H\to\G])\to\coh^0(\adef,[\H\to\G])]$$
 is also compact. 
 Thanks to the finiteness of $\Ker^1(F,\H)$ (cf.  \cite[Prop. 1.7.3]{LBC} for number fields,
 which rephrases results of Kottwitz
  and   \cite[Thm 1.3.3]{Con} for function fields. The latter one relies on  \cite{Ha})
 the image of $\G(\adef)$ in this cokernel is  up to a finite subgroup
 isomorphic to the quotient $$\G(F)\f(\H(\adef))\bs\G(\adef)$$
 and hence this quotient is also compact.
  \end{proof}

We now return to the general case where $f: H \to G$ need not be surjective.

\begin{proposition}\label{coad} Let $\Hplus: = \ZG(\adef)\G(F) \f(\H(\adef))$.
The quotient $\Hplus\bs\G(\adef)$ is an abelian compact group.
\end{proposition}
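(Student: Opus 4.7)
The plan is to reduce to the surjective case settled in Lemma~\ref{cor}. I would introduce the connected reductive $F$-group $\H' := \H \times \Z$ together with the $F$-morphism $\f' : \H' \to \G$ given by $\f'(h,z) = \f(h)z$. The assumption that $\Z \times \H \to \G$ is a central map ensures that $\f'$ is surjective while still inducing a central isogeny on derived groups (since $\H'_{der}=\H_{der}$), so Lemma~\ref{cor} applies to $(\H',\f')$. Since
\[ \f'(\H'(\adef)) = \f(\H(\adef))\Z(\adef) \subset \f(\H(\adef))\ZG(\adef), \]
we have $\G(F)\f'(\H'(\adef)) \subset \Hplus$. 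Lemma~\ref{cor} yields compactness of $\G(F)\f'(\H'(\adef))\bs\G(\adef)$, and the continuous surjection onto $\Hplus\bs\G(\adef)$ transfers this compactness.

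For the abelian property, I would follow the template of the proof of Proposition~\ref{coloc}, working adelically rather than locally. By Lemma~\ref{quasi}, the map of complexes $[\TH\to\TG]\to[\H\to\G]$ is a quasi-isomorphism on separable closures at every place, and this should produce an isomorphism $\coh^0(\adef,[\H\to\G]) \simeq \coh^0(\adef,[\TH\to\TG])$, whose target is abelian as hypercohomology of a complex of commutative group schemes. The exact sequence
\[ 1 \to \f(\H(\adef))\bs\G(\adef) \to \coh^0(\adef,[\H\to\G]) \]
then shows $\G(\adef)/\f(\H(\adef))$ is abelian, and the quotient $\Hplus\bs\G(\adef)$ inherits this.

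The main delicate point I anticipate is transporting the local quasi-isomorphism into a statement about adelic hypercohomology, which is assembled as a restricted direct product over the places of $F$; one must be careful that the identification at the level of cohomology really does respect the adelic topology and group structure. If this causes trouble, a direct local-to-global patching alternative is available: Proposition~\ref{coloc} writes each local commutator $[x_v,y_v]$ inside $\f(\H(F_v))\ZG(F_v)$, and at almost all places the decomposition can be carried out inside the canonical hyperspecial compact subgroups, so that the pieces assemble into a global element of $\f(\H(\adef))\ZG(\adef)$.
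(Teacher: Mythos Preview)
Your argument is correct and matches the paper's own proof, which is the single sentence ``Replacing if necessary $\H$ by $\H\times\Z$ this is a consequence of Lemma~\ref{cor}.'' You have simply unpacked this: form $\H'=\H\times\Z$ with the surjective central $\f'$, invoke Lemma~\ref{cor} for compactness, and pass to the further quotient $\Hplus\bs\G(\adef)$.

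The only point on which you say more than the paper is the abelianness, which the paper leaves implicit (Lemma~\ref{cor} as stated asserts only compactness). Your hypercohomology justification is the right one and is already built into the machinery of Lemma~\ref{cor}: in its proof the quotient $\G(F)\f'(\H'(\adef))\bs\G(\adef)$ is realized, up to a finite group, inside $\coh^0(\adef,[\H'\to\G])\simeq\coh^0(\adef,[\TH\to\TG])$, which is abelian. So you need not worry about the ``delicate point'' you flag; the adelic hypercohomology isomorphism is precisely what the paper is already using. Your alternative local-to-global patching via Proposition~\ref{coloc} (or, more directly, via the commutator map $\kappa$ of the Appendix) also works and is closer in spirit to the Appendix's cohomology-free route.
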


\begin{proof}   Replacing
if necessary $\H$ by $\H\times\Z$ this is a consequence of Lemma \ref{cor}.
 \end{proof}   

For an alternative argument independent of adlic hypercohomology
see \ref{appcompb} in Appendix A.

\section{A first application of Clifford's theory: the local case}\label{loc}

In this section  $F$ is a local field.  
{Some aspects of what follows have been observed by various authors}
(see in particular  \cite{GK},  \cite{He}, \cite{HS}, \cite{LL}, \cite{LS}, \cite{Si} and \cite{T}).

\subsection{The basic results}

Consider $\G$ and $\H$ with a map $\f:\H\to\G$ over $F$ inducing a central isogeny of their derived groups
and consider $\Hplus$ the subgroup of $\G(F)$ generated by $\f(\H(F))$ and $ \ZG(F)$.  
We denote by $N$ the kernel of the map  $\f:\H\to\G$.

\begin{lemma}\label{fini} 
The quotient $\Hplus\bs \G(F)$ is abelian compact.
If $\pi$ is an irreducible unitary representation of $\G(F)$ then  the group $X(\pi)$ of characters $\chi$ of $\Hplus\bs \G(F)$
such that $\pi \otimes \chi \cong \pi$ is finite. 
\end{lemma}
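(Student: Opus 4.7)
The first assertion is exactly Proposition~\ref{coloc}, and when $\mathrm{char}(F)=0$ that same proposition gives that $\Hplus\bs\G(F)$ is in fact finite; its Pontryagin dual $X$ is then finite and $X(\pi)\subseteq X$ is trivially finite. The substance of the lemma therefore lies in the case of positive characteristic, in which $F$ is non-archimedean.

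My plan in that case is to exploit admissibility. Being irreducible and unitary, $\pi$ is admissible, so there is a compact open subgroup $K\subseteq\G(F)$ with $V_\pi^K$ nonzero and of finite dimension $d$. Since $K$ is an open subgroup and the quotient map $\G(F)\to C=\Hplus\bs\G(F)$ is a continuous open homomorphism, the image of $K$ is an open compact subgroup of $C$, hence of finite index. Consequently the annihilator $X_K\subseteq X$ of this image has finite index in $X$, and it suffices to bound $|X_K\cap X(\pi)|$. For each $\chi\in X_K\cap X(\pi)$ I would fix, uniquely up to a scalar by Schur's lemma, an intertwining operator $T_\chi:V_\pi\to V_\pi$ realising $\pi\otimes\chi\simeq\pi$; because $\chi|_K=1$, the operator $T_\chi$ preserves $V_\pi^K$. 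It then remains to show that the restrictions $\{T_\chi|_{V_\pi^K}\}$ for distinct $\chi\in X_K\cap X(\pi)$ are linearly independent in the $d^2$-dimensional space $\mathrm{End}(V_\pi^K)$: that would give $|X_K\cap X(\pi)|\leq d^2$ and hence $|X(\pi)|\leq[X:X_K]\cdot d^2<\infty$.

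Linear independence on $V_\pi$ itself is a classical Dedekind-style argument: from a minimal hypothetical relation $\sum c_i T_{\chi_i}=0$, composing on the right with $\pi(g)$ and combining with $T_{\chi_i}\pi(g)=\chi_i(g)\pi(g)T_{\chi_i}$ yields $\sum c_i\chi_i(g)T_{\chi_i}=0$ for every $g\in\G(F)$; subtracting an appropriate scalar multiple of the original relation shortens it, and the distinctness of the characters forces every $c_i$ to vanish. The main obstacle I anticipate is transferring this argument to $\mathrm{End}(V_\pi^K)$, since the operators $\pi(g)$ do not in general preserve the finite-dimensional subspace $V_\pi^K$. The cleanest remedy is to use the $K$-averaging projector $P_K:v\mapsto\mathrm{vol}(K)^{-1}\int_K\pi(k)v\,dk$, which commutes with every $T_\chi$ precisely because $\chi|_K=1$; the composites $P_K\pi(g)|_{V_\pi^K}$ are then the usual Hecke operators on $V_\pi^K$, and the Dedekind manipulation can be carried out entirely inside $\mathrm{End}(V_\pi^K)$. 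A secondary, but purely expository, point is the appeal to admissibility of irreducible unitary representations in positive characteristic, available as a standard extension of the classical theorems of Harish-Chandra and Bernstein.
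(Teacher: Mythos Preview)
Your argument contains a genuine error in the Pontryagin duality step. You correctly observe that the image $\bar K$ of $K$ in $C=\Hplus\bs\G(F)$ is an open subgroup of finite index. But dualizing the exact sequence $1\to\bar K\to C\to C/\bar K\to 1$ gives
\[
1\to\widehat{C/\bar K}\to X\to\widehat{\bar K}\to 1,
\]
so the annihilator $X_K=\widehat{C/\bar K}$ is \emph{finite} (of order $[C:\bar K]$), while $X/X_K\cong\widehat{\bar K}$ can be infinite. You have the conclusion backwards. Concretely, for $\H=SL(n)\subset\G=GL(n)$ with $p=\mathrm{char}(F)$ dividing $n$, one has $C\cong F^\times/(F^\times)^n$, which is an infinite profinite group; its dual $X$ is infinite, and for any compact open $K\subset GL_n(F)$ the group $X_K$ is finite while $\widehat{\bar K}$ is infinite.

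This breaks the strategy: once $X_K$ is finite, the bound $|X_K\cap X(\pi)|\le d^2$ is vacuous, and what you actually need is finiteness of the image of $X(\pi)$ in $X/X_K\cong\widehat{\bar K}$, i.e.\ that only finitely many restrictions $\chi|_K$ occur. One can check that each such restriction $\psi$ yields a $d$-dimensional isotypic space $V_\pi^{K,\psi}$, and these all sit inside $V_\pi^{K\cap\Hplus}$; but in positive characteristic $\Hplus$ need not be open (in the example above $(F^\times)^n$ has empty interior), so $K\cap\Hplus$ is not open in $K$ and admissibility gives no bound on $\dim V_\pi^{K\cap\Hplus}$. A different idea is required. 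The Hecke-operator refinement you outline addresses only the linear-independence step, not this structural obstacle.

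The paper does not attempt a direct argument: it invokes Proposition~\ref{coloc} for the first assertion, disposes of characteristic zero by finiteness of $C$, and for positive characteristic simply cites \cite{He} and \cite{Si} for the finiteness of $X(\pi)$ in the admissible case, together with Bernstein's theorem \cite{Be} that irreducible unitary representations of reductive $p$-adic groups have admissible smooth vectors.
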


\begin{proof}  
We apply the results of section~\ref{cliff} to $A=\G(F)$  and $B=\Hplus$.
 The assertions are obvious when $C=\Hplus\bs \G$ is finite which is the case
for local fields of zero characteristic according to Proposition \ref{coloc}. For
 non archimedean fields of arbitrary characteristic we appeal again to Proposition \ref{coloc} or \ref{appcompb}
  for the first statement.
 The finiteness of $X(\pi)$ is known for admissible irreducible representations
  (\cite{He}, \cite{Si}). To conclude we recall that the subspace of smooth vectors in an
irreducible unitary representation of the group of points of a connected
reductive group group over a non archimedean local field is admissible (\cite{Be}).
 \end{proof}

Let $F$ be a non archimedean local field,  and assume $\G$ is a  quasi-split
connected reductive group,
split over an unramified extension. Choose an hyper-special maximal compact subgroup $K\subset G(F)$.
 We say that a representation $\pi$ of $\G(F)$  is unramified if  the operator 
 $\pi(K)$ fixes a non zero vector.  

\begin{lemma}\label{unr} If  $\pi$ is unramified, all elements in $X(\pi)$ are also unramified.
\end{lemma}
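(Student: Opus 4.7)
The plan is to show that every $\chi \in X(\pi)$ satisfies $\chi|_K = 1$. The key step is to produce, from the hypothesis $\pi \otimes \chi \simeq \pi$, a $K$-subrepresentation of $\pi$ isomorphic to the character $\chi^{-1}|_K$; one then uses the structure of unramified representations together with the structure of the hyperspecial $K$ to force this character to be trivial.

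Concretely, let $U \colon \pi \to \pi \otimes \chi$ be a nonzero intertwiner (which exists by hypothesis) and $v \ne 0$ a $K$-fixed vector in $\pi$. The computation $U(v) = U(\pi(k)v) = \chi(k)\,\pi(k) U(v)$ for $k\in K$ gives $\pi(k) U(v) = \chi(k)^{-1} U(v)$, so the line through $U(v)\neq 0$ is a one-dimensional continuous $K$-subrepresentation of $\pi$ affording the character $\chi^{-1}|_K$.

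Next, realize $\pi$ (or the admissible subspace of its smooth vectors) as an irreducible subquotient of an unramified principal series $I(\lambda) = \Ind_{B(F)}^{G(F)} \lambda$, where $B = TU$ is an $F$-rational Borel of $G$ and $\lambda$ is an unramified character of $T(F)$. By the Iwasawa decomposition $G(F) = B(F) K$ and the triviality of $\lambda$ on $T(F) \cap K = T(\mathcal{O}_F)$, one obtains $I(\lambda)|_K \simeq \Ind_{B(F) \cap K}^K \mathbf{1}$ as $K$-representations. Since $K$ is compact, $\pi|_K$ is a $K$-subquotient of this. By Frobenius reciprocity for the compact group $K$, the character $\chi^{-1}|_K$ appearing in $\pi|_K$ must be trivial on $B(F) \cap K$; in particular, $\chi$ is trivial on $T(\mathcal{O}_F)$.

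To finish, observe that $\chi$, being a continuous character of $G(F)$, is automatically trivial on every (relative) root subgroup $U_\alpha(F)$: the relation $[t,u] = u^{\alpha(t)-1}$ for $t \in T(F)$ with $\alpha(t) \neq 1$ and $u \in U_\alpha(F)$ exhibits $U_\alpha(F)$ as contained in the commutator subgroup of $G(F)$, hence in the kernel of any character. In particular $\chi|_{U_\alpha(\mathcal{O}_F)} = 1$ for every $\alpha$. Since $K$ is hyperspecial and $G$ is quasi-split, split over an unramified extension, $K$ is generated by $T(\mathcal{O}_F)$ together with the root subgroups $U_\alpha(\mathcal{O}_F)$ (for $\alpha$ running over both positive and negative relative roots). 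Combined with the triviality of $\chi$ on $T(\mathcal{O}_F)$ established in the previous step, this forces $\chi|_K = 1$.

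The main obstacle I anticipate is the structural input in the last two steps: realizing an irreducible unramified representation as a subquotient of an unramified principal series with the required control on the $K$-restriction (standard Casselman--Borel theory), and the fact that hyperspecial $K$ is generated by $T(\mathcal{O}_F)$ and the root subgroups $U_\alpha(\mathcal{O}_F)$. These are where the quasi-split/unramified hypothesis on $G$ is really used.
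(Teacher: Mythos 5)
Your proof is correct, and it ends at the same structural place as the paper's, but it obtains the crucial intermediate step --- triviality of $\chi$ on the maximal compact subgroup of the torus --- by a genuinely different mechanism. The paper compares cuspidal supports: $\pi$ and $\pi\otimes\chi$ are subquotients of the principal series attached to $\lambda$ and $\lambda\tilde\chi$ respectively, so $\pi\simeq\pi\otimes\chi$ forces $\lambda\tilde\chi=s(\lambda)$ for some Weyl element $s$, whence $\tilde\chi=s(\lambda)\lambda^{-1}$ is trivial on $T(F)\cap I$. You instead use the intertwiner applied to a $K$-fixed vector to exhibit the $K$-type $\chi^{-1}\vert_K$ inside $\pi$, and then bound the one-dimensional $K$-types of $\pi$ by those of $\Ind_{B(F)\cap K}^{K}\mathbf{1}$ via Mackey restriction and Frobenius reciprocity for the compact group $K$; this trades the uniqueness of supercuspidal support for semisimplicity of representations of $K$, which is arguably softer. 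A second, smaller divergence: the paper kills $\chi$ on the unipotent pieces (and on Weyl representatives) by using that $\chi$ is by definition trivial on $f(H(F))$, which contains $U(F)$ and $\overline{U}(F)$, whereas your commutator argument shows the stronger statement that \emph{any} character of $G(F)$ dies on the relative root subgroups --- at the mild cost of some extra care when the relative root system is non-reduced and $U_{(\alpha)}$ is non-abelian (filter by $U_{2\alpha}$ there, or simply fall back on $\chi\vert_{f(H(F))}=1$). The two structural inputs you flag at the end are exactly the ones the paper also relies on, the generation statement appearing there in the equivalent form that $K$ is generated by an Iwahori subgroup (with its Iwahori factorization) together with Weyl representatives taken in $f(H(F))\cap K$.
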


\begin{proof} Choose an Iwahori subgroup $I\subset K$; then there is a 
 unique Borel subgroup $P_0\subset G$ with Levi decomposition
 $P_0=T\ltimes U$  such that $$I=(T(F)\cap I)(U(F)\cap I)(\overline U(F)\cap I) $$
 where $\overline U$ is the opposite unipotent subgroup.
An unramified  representation $\pi$ is the spherical  subquotient of a principal series
representation obtained by parabolic induction of a character  $\lambda$ of $T(F)$ which is  trivial on $T(F)\cap I$.
A character $\chi\in X(\pi)$ defines by restriction a character   $\tilde\chi$ of $T(F)$.
 The representation $\pi\otimes\chi$  is a  subquotient of the principal series
representation obtained by parabolic induction of  $\lambda\tilde\chi$.
But  since $\pi\simeq\pi\otimes\chi$ one has
$$\lambda\tilde\chi=s(\lambda)\qquad\hbox{for some $s$ in the Weyl group}\,\,.$$
This shows that $\tilde\chi=s(\lambda)\lambda\mun$ is trivial on $T(F)\cap I$. 
Then $\chi$ must be trivial on the subgroup generated by
$\f(\H(F))$ and $T(F)\cap I$. Now $f(H(F))\supset U(F)\supset U(F)\cap I$ and similarly for $\overline U$.
Hence $\chi$ is trivial on $I$. Denote by $K'$ the hyper-special subgroup in $H(F)$ such that
$f(K')\subset K$. Any $s'\in W'$ has  a representative $w_{s'}\in H(F)\cap K'$. 
The Weyl group $W'$ of $H(F)$ maps bijectively via $f$ onto the Weyl  group $W$ of $G(F)$
and hence any $s\in W$ has  a representative $$w_s=f(w_{s'})\in f(H(F))\cap K\,\,.$$ Since the $w_s$ and $I$ generate $K$
the character $\chi$ is trivial on $K$.  \end{proof}

\begin{proposition} \label{local}
Given an irreducible unitary representation $\pi$ of $\G(F)$ its restriction to $\H(F)$ is  a direct sum of finitely many
irreducible unitary representations that are $\G(F)$-conjugate.
Conversely, any irreducible unitary representation
of $\H(F)$ trivial on $N$
occurs in the restriction of some $\pi$ and all such irreducible representations are of the form $\pi\otimes\chi$ with
$\chi\in X$ the group of characters of $\G(F)/\Hplus$.
\end{proposition}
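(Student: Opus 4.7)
The plan is to transfer the statement to the pair $(A,B) = (\G(F), \Hplus)$ and apply the Clifford-theoretic machinery developed in Section~\ref{cliff}. By Lemma~\ref{fini} the relevant hypotheses are in place: $\Hplus \bs \G(F)$ is compact abelian, and $X(\pi)$ is finite for any irreducible unitary representation $\pi$ of $\G(F)$.

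\emph{Forward direction.} Given an irreducible unitary $\pi$ of $\G(F)$, I apply Proposition~\ref{cliffa} to the pair $(\G(F), \Hplus)$ to obtain a decomposition of $\pi|_{\Hplus}$ as a finite direct sum (with constant multiplicity) of $\G(F)$-conjugates of some irreducible unitary representation $\sigma$ of $\Hplus$. I then further restrict from $\Hplus$ to $\f(\H(F))$: since $\ZG(F)$ is central in $\G(F)$ (hence in $\Hplus$), Schur's lemma forces it to act by a scalar on each irreducible constituent $\sigma^g$, and therefore each such $\sigma^g$ remains irreducible upon restriction to $\f(\H(F))$. The bijection between irreducible unitary representations of $\H(F)$ trivial on $N$ and those of $\f(\H(F)) = \H(F)/N(F)$ then yields the desired decomposition of $\pi|_{\H(F)}$ into a finite direct sum of $\G(F)$-conjugate irreducibles.

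\emph{Converse.} Given an irreducible unitary $\tau$ of $\H(F)$ trivial on $N$, view it as a representation of $\f(\H(F))$. I first extend $\tau$ to an irreducible unitary representation $\tilde\sigma$ of $\Hplus = \f(\H(F)) \cdot \ZG(F)$ as follows: the scalar $\omega_0$ by which $\ZG(F) \cap \f(\H(F))$ acts on $\tau$ is a character of a closed subgroup of the LCA group $\ZG(F)$, hence by Pontryagin duality extends to a character $\omega$ of $\ZG(F)$, and the formula $\tilde\sigma(hz) := \tau(h)\,\omega(z)$ for $h\in \f(\H(F))$ and $z\in \ZG(F)$ is then well defined and irreducible. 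To produce an irreducible unitary $\pi$ of $\G(F)$ containing $\tilde\sigma$ (and therefore $\tau$) in its restriction, I consider $\rho = \mathrm{Ind}_{\Hplus}^{\G(F)} \tilde\sigma$; since $\Hplus \bs \G(F)$ is compact (finite in characteristic zero, by Proposition~\ref{coloc}), $\rho$ admits irreducible subrepresentations, and Frobenius reciprocity (Proposition~\ref{frobenius}, applicable by Lemma~\ref{fini}) applied to any such irreducible $\pi \subset \rho$ forces $\Hom_{\Hplus}(\pi|_{\Hplus}, \tilde\sigma) \neq 0$. Finally, any two irreducible $\pi, \pi'$ of $\G(F)$ sharing $\tilde\sigma$ as an $\Hplus$-constituent are related, by Proposition~\ref{cliffb}, via $\pi' \simeq \pi \otimes \chi$ for some $\chi \in X$, giving the last clause.

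The main obstacle lies in justifying the existence of irreducible subrepresentations of $\rho$ in the positive-characteristic case, where $\Hplus \bs \G(F)$ is compact but not finite and Mackey's machine is not immediately applicable. I would handle this by exploiting admissibility: the smooth vectors of $\tau$ form an admissible $\H(F)$-module by Bernstein's theorem (as invoked in Lemma~\ref{fini}), and this admissibility passes to $\tilde\sigma$ as $\Hplus/\f(\H(F))$ is abelian, after which the spectral decomposition of $\rho$ along characters of the compact abelian quotient $\Hplus \bs \G(F)$ yields the required irreducible pieces.
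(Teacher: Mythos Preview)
Your argument is correct and follows the paper's proof almost verbatim: both pass to the pair $(G(F),H^+)$, use Lemma~\ref{fini} together with Proposition~\ref{cliffa} for the forward direction, and for the converse extend $\sigma$ to $H^+$ via a compatible central character $\omega$, induce to $G(F)$, and appeal to Propositions~\ref{cliffb} and~\ref{frobenius}. Your explicit concern about the existence of an irreducible summand of $\rho$ in positive characteristic is a point the paper simply absorbs into its citation of Proposition~\ref{cliffb}; your admissibility fix is the right instinct, though the phrase ``spectral decomposition of $\rho$ along characters of $H^+\backslash G(F)$'' is not quite the mechanism, since that quotient does not act on $\rho$---what you want is that smooth induction from the open cocompact subgroup $H^+$ preserves admissibility.
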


\begin{proof}
We  first restrict $\pi$ to $\Hplus$.  In view of Lemma  \ref{fini} we may use Proposition \ref{cliffa} with
  $A=G(F)$ and $B=\Hplus$. Hence
this restriction is a direct sum of finitely many
irreducible unitary representations of $\Hplus$ that are conjugate under $\G(F)$.
Then restriction from $\Hplus$ to $N\bs\H(F)$ preserves irreducibility.

Conversely, consider a representation $\sigma$ of $\H(F)$ and $\omega$ a character of  $ \ZG(F)$
 such that its restriction to $\ZH(F)$ is the character with which $\ZH(F)$ acts via  $\sigma$.  One can extend $\sigma$ to a 
representation  $\sigma^+$ of  $\Hplus$ and 
 then  induce this representation from $\Hplus$ to $\G(F)$.  According to Proposition \ref{cliffb} 
 this is a sum of  representations  of the form $\pi\otimes\chi$ with
 $\pi$ irreducible and
$\chi\in X$ the group of characters of $\Hplus\bs\G(F)$. The restriction of $\pi$ to $N\bs\H(F)$ contains $\sigma$ 
according to Proposition~\ref{frobenius}.
\end{proof}

We  observe that if $\G$ and $\H$ are quasisplit, and if $\pi$ is generic (i.e.\!  has a Whittaker model for some
character of the unipotent radical of a chosen Borel subgroup)
the restriction $\pi\vert_{\f(\H(F))}$ is multiplicity free (i.e.\! $m=1$
in the notation of  Proposition~\ref{cliffa}) 
as follows from Proposition~\ref{frobenius} using
the uniqueness of Whittaker models and the compatibility
of Whittaker models with induction.

\subsection{Two equivalence relations}

\begin{definition}   We say that two irreducible unitary representations 
$ \sigma$ and $ \sigma'$ of $\H(F)$ are in the same
``$\G(F)$-packet''  
 if there exists an element $g\in\G(F)$ such that  $\sigma^g \cong \sigma'$.
We denote by $\mathcal{A}_{\G}(\H)$ the set of $\G(F)$-packets of irreducible unitary representations of $\H(F)/N$.
\end{definition}

We  observe  that
$\G(F)$-packets coincide with $L$-packets  when $\H=SL(n)$  and $\G=GL(n)$
and for compatible inner forms as well. In general   $L$-packets  should be
unions of $\G(F)$-packets since adjoint conjugacy is a special case of stable conjugacy.

\begin{definition}We define two irreducible unitary representations $\pi $ and $\pi'$ of $\G(F)$ to be 
$\mathcal{E}_{\H}$-equivalent  if there exists a character $\mu$ of $\G(F)/\Hplus$ such that $\pi \otimes \mu \cong 
\pi'$.  
We denote by $\mathcal{E}_{\H}(\G)$ the  corresponding set of equivalence classes. 
\end{definition}

Now,  all elements in the $\mathcal{E}_{\H}$-class of some $\pi$ have equivalent restrictions to $\f(\H(F))$ and all components 
of the restriction
  belong to the same $\G$-packet. 
Let  $R$  be the map
  which assigns to an $\mathcal{E}_{\H}$-equivalence class 
  represented by $\pi$ the $\G(F)$-packet  of components $\sigma^g$ of the restriction of
  $\pi$ to ${{ \H(F)}}$.   
 The above Propositions and remarks can be summarized as 
  
  \begin{proposition} \label{bija}   The map 
  $R : \mathcal{E}_{\H}(\G) \to \mathcal{A}_{\G}(\H)$
   is a bijection. 
\end{proposition}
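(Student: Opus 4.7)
The plan is to derive the three required properties of $R$---well-definedness, surjectivity, and injectivity---as direct corollaries of Proposition \ref{local}. The two equivalence relations defining $\mathcal{E}_{\H}(\G)$ and $\mathcal{A}_{\G}(\H)$ are engineered to match exactly the twist-by-$X$ ambiguity appearing in the converse half of that proposition and the $\G(F)$-conjugation of constituents appearing in its direct half, so what remains is pure book-keeping.

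For well-definedness, suppose $\pi' \cong \pi \otimes \mu$ with $\mu$ a character of $\G(F)/\Hplus$. Since $\mu$ is trivial on $\f(\H(F)) \subset \Hplus$, we have $\pi'|_{\f(\H(F))} \cong \pi|_{\f(\H(F))}$. The first assertion of Proposition \ref{local} then ensures that the $\H(F)$-constituents of either restriction form a single $\G(F)$-conjugacy class, which is by definition the $\G(F)$-packet $R([\pi]) = R([\pi'])$. For surjectivity, an element of $\mathcal{A}_{\G}(\H)$ is represented by some irreducible unitary $\sigma$ of $\H(F)/N$, and the converse half of Proposition \ref{local} produces an irreducible $\pi$ of $\G(F)$ whose restriction to $\H(F)$ contains $\sigma$; then $R([\pi])$ is the prescribed packet.

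For injectivity, suppose $R([\pi]) = R([\pi'])$. Choose any constituent $\sigma$ of $\pi|_{\H(F)}$, whose $\G(F)$-packet is $R([\pi]) = R([\pi'])$. Then some $\G(F)$-conjugate $\sigma^g$ of $\sigma$ is a constituent of $\pi'|_{\H(F)}$; and by the first assertion of Proposition \ref{local} applied to $\pi$, this same $\sigma^g$ is also a constituent of $\pi|_{\H(F)}$, since all constituents of $\pi|_{\H(F)}$ are $\G(F)$-conjugate. Thus $\pi$ and $\pi'$ share an $\H(F)$-constituent, and the last clause of Proposition \ref{local} yields a character $\chi \in X$ with $\pi' \cong \pi \otimes \chi$, i.e.\ $[\pi'] = [\pi]$ in $\mathcal{E}_{\H}(\G)$.

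All the analytic weight lies in Proposition \ref{local}; the only conceptual point to check---and the closest thing to an obstacle---is that the set of $\H(F)$-constituents of $\pi|_{\H(F)}$ forms a \emph{full} $\G(F)$-packet rather than merely a subset of one, so that the coarser relation defining $\mathcal{A}_{\G}(\H)$ matches the twisting relation defining $\mathcal{E}_{\H}(\G)$. This is immediate from the $\G(F)$-transitivity asserted in the first clause of Proposition \ref{local}, and with it the bijectivity of $R$ is established.
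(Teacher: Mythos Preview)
Your proof is correct and follows the same approach as the paper, which presents Proposition~\ref{bija} explicitly as a summary of Proposition~\ref{local} and the surrounding remarks without a separate proof block. Your write-up simply spells out the book-keeping the paper leaves implicit; the one point worth tightening is that ``all constituents are $\G(F)$-conjugate'' in Proposition~\ref{local} literally says each constituent is \emph{some} conjugate of $\sigma$, whereas your injectivity step uses that \emph{every} conjugate $\sigma^g$ occurs---this is true (either from the decomposition in Proposition~\ref{cliffa} or because $\pi(g)$ intertwines $\pi\vert_{\H(F)}$ with its $g$-conjugate), but you might cite that rather than the first clause of Proposition~\ref{local}.
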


\section{Second application: the case of cuspidal  representations.}\label{cusp}

 Now $F$ is a global field and
 we examine how cuspidal automorphic representation behave under restriction and induction.
 By cuspidal representation we understand an irreducible unitary automophic representation ocuring in the cuspidal
 spectrum.  For a definition of these objects over fields of arbitrary characteristics
we refer  the reader to \cite{MW}.   We consider two connected reductive groups with a  
map $\f:\H\to\G$ over  some global field $F$   inducing a central isogeny of their derived groups.

\subsection{The key construction}

We have introduced in subsection~\ref{crossg} the subgroup
$$\Hplus: = \ZG(\adef)\G(F)\f( \H(\adef))$$ in $\G(\adef)$.
 According to Propositions \ref{coad} or \ref{appcompb}
 the quotient $\Hplus\bs\G(\adef)$ is abelian and compact.
 
   \begin{lemma}\label{finiadele} 
  Let $\pi$ be an automorphic representation of $\G(\adef)$. Then the group $X(\pi)$ of characters
 of $\Hplus\bs\G(\adef)$ such that $\pi\otimes\chi\simeq\pi$ is finite.
\end{lemma}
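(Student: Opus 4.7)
The plan is to bound $X(\pi)$ by combining the place-by-place local finiteness from Lemma~\ref{fini} with the almost-everywhere unramified constraint from Lemma~\ref{unr}, and then finish by a compactness argument using Proposition~\ref{coad}.

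Since $\pi$ is automorphic, I fix a finite set $S$ of places of $F$, containing all archimedean places and the places of bad reduction of $\G$ and $\H$, such that for every $v\notin S$ the group $\G$ is quasi-split and splits over an unramified extension of $F_v$, a compatible hyperspecial maximal compact subgroup $K_v\subset\G(F_v)$ exists, and $\pi_v$ admits a non-zero $K_v$-fixed vector. For any $\chi\in X(\pi)$ the global isomorphism $\pi\otimes\chi\simeq\pi$ localizes to $\pi_v\otimes\chi_v\simeq\pi_v$ at every place, so $\chi_v\in X(\pi_v)$. Lemma~\ref{unr} then forces $\chi_v$ to be trivial on $K_v$ for every $v\notin S$, while Lemma~\ref{fini} shows that $X(\pi_v)$ is finite for each $v\in S$. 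Being continuous characters of finite order, the elements of the finite group $X(\pi_v)$ take values in a fixed finite subgroup of $S^1$ and thus admit a common open kernel $K_v^0\subset\G(F_v)$ (the preimage of the identity under the map to that finite discrete target).

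Next I set $K:=\prod_{v\in S}K_v^0\prod_{v\notin S}K_v$, which is an open subgroup of $\G(\adef)$. By construction every $\chi\in X(\pi)$ is trivial on $\Hplus K$. By Proposition~\ref{coad} the quotient $\Hplus\bs\G(\adef)$ is compact abelian, and the image of $K$ in this compact group is an open (hence closed) subgroup; therefore the further quotient $\Hplus K\bs\G(\adef)$ is a discrete compact abelian group, i.e.\ finite. The group $X(\pi)$ injects into the Pontryagin dual of this finite abelian group, and is therefore itself finite.

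The main point is essentially bookkeeping: one must choose $S$ large enough that both Lemma~\ref{unr} (giving triviality of $\chi_v$ on the hyperspecial $K_v$ for $v\notin S$) and Lemma~\ref{fini} (giving finiteness of $X(\pi_v)$ for $v\in S$) apply. Once this is done, the remainder is a routine compactness-plus-open-subgroup argument, and I do not anticipate any serious obstacle beyond verifying the common open kernel assertion at the archimedean places in $S$, which follows automatically from the finiteness of $X(\pi_v)$ there.
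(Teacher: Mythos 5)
Your proof is correct and takes essentially the same route as the paper: finiteness of the local groups $X(\pi_v)$ at the bad places (Lemma~\ref{fini}), triviality of $\chi_v$ on $K_v$ at the unramified places (Lemma~\ref{unr}), and compactness of $\Hplus\bs\G(\adef)$ to conclude that the open subgroup $\Hplus K$ has finite index. The one point you leave implicit --- that $\chi_v$ is trivial on $\Hplus_v$, so that it genuinely lies in the group $X(\pi_v)$ of Lemma~\ref{fini} --- is immediate from the fact, noted explicitly in the paper, that the global $\Hplus$ contains each local $\Hplus_v$.
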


\begin{proof} 
   We observe that  $\Hplus$  contains the product over all places $v$ of groups $\Hplus_v$  
  generated by $\f(\H_v)$ and $ \ZG_v$.    Let $\pi$ be an  automorphic representation of $\G(\adef)$. Thanks to 
    Lemma \ref{fini} and \ref{unr}
    we know
 there is  a compact open subgroup $K_f$ of the finite ad\`eles
 on which any $\chi\in X(\pi)$ is trivial.
 Recall that $\Hplus_\infty$ is of finite index in $\G_\infty$ when $F$ is a number field.
 In all cases $ K_f.\Hplus$ is an open subgroup of finite index in $\G(\adef)$
 on which any $\chi$ such that $\pi\otimes\chi\simeq\pi$ is necessarily trivial, hence $X(\pi)$ is finite.
  \end{proof} 
  
Denote by $\NN$ the kernel of the map $\f_\adef:\H(\adef)\to\G(\adef)\,\,.$
This is a subgroup in the center of $\H(\adef)$
and we may identify $\NN\bs\H(\adef)$ with $\f(\H(\adef))$.
Let $\Zplus:= \ZG(\adef)\G(F)$.
Observe that
$$\Zplus/\G(F)= \ZG(\adef) \G(F)/\G(F)
=  \ZG(\adef)/  \ZG(F)\,\,.$$
  Let 
$$ \Zun^+:  \Zplus \cap \f(\H(\adef)) \qquad\hbox{and} \qquad \Zun=\f\mun(\Zun^+)\,\,.$$
$\Zun$  is a  closed subgroup  in $\H(\adef)$ that contains and
normalizes  $\H(F)$.  Let  $$\Gamma^+=\G(F)\cap \Zun^+=\G(F)\cap\f(\H(\adef))
\qquad\hbox{and}\qquad\Gamma=\f\mun(\Gamma^+)
\,\,.$$
The subgroup  $\Gamma$ in $\H(\adef)$ contains $\NN.\H(F)$ and
$\Zplus_1/\Gamma^+\simeq\Zun/\Gamma\,\,.$
 Thus,  a unitary character $$\omega:  \ZG(\adef)/ \ZG(F) \to \CM^{\times}$$
 defines a character of $\Zplus$,  again denoted $\omega$, and
 we obtain by restriction a character $\omega_1^+$ on  $\Zun^+$ trivial on $\Gamma^+$.
  Observe that conversely any  character on $\Zun^+/\Gamma^+$ extends
 to a character of $  \ZG(\adef)/ \ZG(F)$. Denote by $\omega_1$ the character of $\Zun/\Gamma$
 defined by $\omega_1^+$.

\begin{remark}\label{contrex}
Observe that if $\f$ is injective, i.e.\! if $\H$ is a subgroup of $\G$, then 
$$\Gamma=\G(F)\cap\H(\adef)=\H(F)\,\,.$$
But when $\f$ is not injective it may happen that $\NN.\H(F)$ is a strict subgroup of $\Gamma$.  This is,
for example, the case if $\G=\GM_m$, $\H=\GM_m$
and $f:x\mapsto x^n$ when $(F,n)$ is a counter example to the local-global principle for $n$-th powers 
(see \cite[Chap. X, Thm. 1]{AT}).
\end{remark}

Since the group $\Zun$ normalizes $\H(F)$, it acts via left translations on $\H(F)\backslash \H(\adef)$, hence on the space 
$$L^2(\H(F)\backslash \H(\adef),\omega_0)$$ of functions that are
square-integrable modulo the center on $\H(F)\backslash \H(\adef)$ and that transforms according to $\omega_0$
some automorphic character of the center of $\H(\adef)$. 
The latter space is 
endowed with the right regular representation $\rho_{\omega_0}$ of $\H(\adef)$. 

The space of left $\Gamma$ invariant functions that are
square-integrable modulo the center on $\H(F)\backslash \H(\adef)$ can be decomposed
  according to the characters of $\Gamma\bs\Zun$ and this decomposition is compatible with the spectral 
decomposition of the right regular representation.  Observe that the action of $\Zun$ preserves cuspidality.
Now,  given $\omega$ and $\omega_1$ as above consider
a   function $\vf$ on $\H(\adef)$ which satisfies the condition
$$ \vf(c h) = \omega_1(c) \vf(h) \hbox{ for all}\;  c \in \Zun,\,\, h \in\H(\adef)\,\,.$$
There exists a unique function $\vf^+$ on $ \Hplus$ such that 
$$ {\vf}^+({z}\gamma g)  = \omega({z}) {\vf}^+(g)$$
 for any ${z} \in  \ZG(\adef)$,  $\gamma\in \G(F)$, $g\in\Hplus$, and moreover (using $\dot x$ to denote $\f(x)$)
 $${\vf}^+( \zeta \dot h) = \vf(c h)= \omega_1(c) \vf(h)$$
 whenever ${\zeta} = \dot c \gamma$ with $c \in \Zun$, ${\gamma} \in \G(F)$
 and  $h\in \H(\adef)$.
  This yields a bijection 
$$ L^2(\H(F)\backslash \H(\adef), \omega_1)
 \tilde{\longrightarrow} L^2(\G(F) \backslash \Hplus, \omega),$$
  that preserves cuspidality.   Here cuspidality for representations of $\Hplus$ has the obvious definition namely the vanishing
of integrals over quotients $U(F)\backslash U(\adef)$
of non trivial ``unipotent subgroups''  that are isomorphic images in $\G(F)\backslash\Hplus$ of quotients of unipotent subgroups  in $
\H(\adef)$.
  Hence one obtains a bijection between the cuspidal spectra
 $$ L^2_{\cusp}(\H(F)\backslash \H(\adef), \omega_1)
 \tilde{\longrightarrow}L^2_{\cusp}(\G(F) \backslash \Hplus, \omega).
 \leqno{(\star)} $$
  It is known that the right regular representation $\rho_{\cusp,\omega_1}$ of $\H(\adef)$ in
  $$ L^2_{\cusp}(\H(F)\backslash \H(\adef), \omega_1)$$ splits into a direct Hilbert sum with
  finite  multiplicities. This implies that
  the right regular representation of $\Hplus$ in
 $ \rho^+_{\cusp,\omega}$ in $L^2_{\cusp}(\G(F) \backslash \Hplus, \omega)$
 also  splits into a direct Hilbert sum with
  finite  multiplicities.

Now we observe that $L^2(\G(F) \backslash \Hplus, \omega)$ is the space of the representation $$\rho_{\omega}^+ = \text{Ind}
^{\Hplus}
    _{\Zplus}\omega,$$
while $L^2(\G(F) \backslash \G(\adef), \omega)$ is the space of the representation 
$$\rho_{\omega} = \text{Ind}^{\G(\adef)}
    _{ \Zplus}\,\,\omega.$$ Thus, since induction preserves cuspidality,    we see that
  $$   \rho_{\cusp,\omega} = \text{Ind}^{\G(\adef)}
    _{ \Hplus}\rho^+_{\cusp,\omega}.$$

\subsection{Main results}

\begin{theorem} \label{cuspa} The restriction to $\NN\bs\H(\adef)$ 
of any cuspidal representation $\pi$ of $\G(\adef)$
 contains a cuspidal representation $\sigma$ of $\H(\adef)$.
\end{theorem}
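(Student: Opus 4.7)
The plan is to combine the identity
$$\rho_{\cusp,\omega}=\Ind_{\Hplus}^{\G(\adef)}\rho^+_{\cusp,\omega}$$
established in the key construction with Frobenius reciprocity (Proposition \ref{frobenius}) and the bijection $(\star)$. Let $\omega$ be the central character through which $\ZG(\adef)/\ZG(F)$ acts on the given cuspidal $\pi$, so that $\pi$ occurs as an irreducible subrepresentation of $\rho_{\cusp,\omega}$. Since $\rho^+_{\cusp,\omega}$ decomposes as a Hilbert direct sum of irreducible representations of $\Hplus$ with finite multiplicities, there exists an irreducible constituent $\sigma^+$ of $\rho^+_{\cusp,\omega}$ such that $\pi$ occurs in $\Ind_{\Hplus}^{\G(\adef)}\sigma^+$.

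Next, I would apply Proposition \ref{frobenius} to the pair $\Hplus\subset\G(\adef)$. The required hypotheses are in place: $\Hplus\bs\G(\adef)$ is compact abelian by Proposition \ref{coad}, and $X(\pi)$ is finite by Lemma \ref{finiadele}. Frobenius reciprocity then gives
$$\Hom_{\Hplus}(\pi|_{\Hplus},\sigma^+)\simeq\Hom_{\G(\adef)}(\pi,\Ind_{\Hplus}^{\G(\adef)}\sigma^+)\neq 0,$$
and since $\pi|_{\Hplus}$ decomposes as a finite direct sum of irreducibles by Proposition \ref{cliffa}, the representation $\sigma^+$ is in fact a direct summand of $\pi|_{\Hplus}$.

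Finally, the bijection $(\star)$ transports the realization of $\sigma^+$ inside $L^2_{\cusp}(\G(F)\bs\Hplus,\omega)$ to a cuspidal automorphic representation $\sigma$ of $\H(\adef)$ in $L^2_{\cusp}(\H(F)\bs\H(\adef),\omega_1)$. Unwinding the defining relation $\vf^+(\dot h)=\vf(h)$ shows that right translation by $\f(h)$ on $\vf^+$ matches right translation by $h$ on $\vf$; hence $\sigma^+|_{\f(\H(\adef))}$ is canonically identified with $\sigma$ viewed as a representation of $\NN\bs\H(\adef)$, and therefore $\sigma$ occurs as a cuspidal constituent of $\pi|_{\NN\bs\H(\adef)}$. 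The most delicate step is the transfer via $(\star)$: one must check that cuspidality (defined through integrals over unipotent subgroups on $\G(F)\bs\Hplus$) is preserved under the correspondence, and that the actions on the two sides match in the way just described. Both points are built into the construction $\vf\mapsto\vf^+$, and all remaining ingredients — finiteness of $X(\pi)$, compactness of $\Hplus\bs\G(\adef)$, Frobenius reciprocity, and the Clifford-type decomposition of $\pi|_{\Hplus}$ — are already in hand from the preceding sections.
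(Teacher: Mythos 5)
Your proposal is correct and follows essentially the same route as the paper: realize $\pi$ inside $\Ind_{\Hplus}^{\G(\adef)}\sigma^+$ for some constituent $\sigma^+$ of $\rho^+_{\cusp,\omega}$, apply Frobenius reciprocity (via Lemma \ref{finiadele} and Proposition \ref{frobenius}) to put $\sigma^+$ in $\pi|_{\Hplus}$, and transfer back through $(\star)$. The only minor imprecision is that $\sigma^+|_{\f(\H(\adef))}$ need not be a single irreducible $\sigma$ but is, via $(\star)$, a direct sum of cuspidal representations, any constituent of which serves as the desired $\sigma$.
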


 \begin{proof}
  Any cuspidal automorphic representation $\pi$ of $\G(\adef)$
with central character $\omega$ occurs in 
$$\rho_{\sigma^+}=\text{Ind}^{\G(\adef)}_{ \Hplus}\sigma^+$$
for some  constituent $\sigma^+$  of $\rho_{\cusp,\omega}^+$. 
It follows from Lemma~\ref{finiadele} and  Proposition~\ref{frobenius} that $\sigma^+$ occurs in the restriction of $\pi$ to $\Hplus$.
  But the isomorphism $(\star)$ shows that
the restriction of $\sigma^+$ to $\NN\bs\H(\adef)$ is a direct sum of cuspidal representations. 
\end{proof}

\begin{theorem} \label{cuspb}
 Any cuspidal representation $\sigma$ of $\H(\adef)$ that can be realized in a space of functions on
 $\Gamma\bs\H(\adef)$
appears in the restriction of some cuspidal representation $\pi$ of $\G(\adef)$.
This is in particular true for any cuspidal representation of $\H(\adef)$ when $\f$ is injective.
\end{theorem}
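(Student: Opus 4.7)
The plan is to reverse the construction underlying Theorem~\ref{cuspa}. Let $\sigma$ be a cuspidal representation of $\H(\adef)$ realized in a space of functions on $\Gamma\bs\H(\adef)$. The first step is to produce the data needed to invoke the bijection $(\star)$: decomposing the $\Gamma$-invariant cuspidal spectrum under the abelian quotient $\Zun/\Gamma\simeq\Zun^+/\Gamma^+$ places $\sigma$ inside $L^2_\cusp(\H(F)\bs\H(\adef),\omega_1)$ for some character $\omega_1$ of $\Zun$ trivial on $\Gamma$, and by the observation made in the paragraph preceding Remark~\ref{contrex} the corresponding character $\omega_1^+$ of $\Zun^+/\Gamma^+$ extends to a unitary character $\omega$ of $\ZG(\adef)/\ZG(F)$.

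Applying the bijection $(\star)$ then transports $\sigma$ to an irreducible cuspidal representation $\sigma^+$ of $\Hplus$ sitting inside $L^2_\cusp(\G(F)\bs\Hplus,\omega)$. Using the identity $\rho_{\cusp,\omega}=\Ind_{\Hplus}^{\G(\adef)}\rho^+_{\cusp,\omega}$ recorded earlier, the representation $\Ind_{\Hplus}^{\G(\adef)}\sigma^+$ lies entirely inside the cuspidal spectrum of $\G(\adef)$ with central character $\omega$. Because $\Hplus\bs\G(\adef)$ is compact abelian by Proposition~\ref{coad}, Proposition~\ref{cliffb} decomposes this induction as a Hilbert direct sum of representations of the form $\pi\otimes\chi$, where $\pi$ is an irreducible cuspidal automorphic representation of $\G(\adef)$ and $\chi$ runs over characters of $\Hplus\bs\G(\adef)$.

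Next, fix any such irreducible constituent $\pi$. Lemma~\ref{finiadele} guarantees that the group $X(\pi)$ is finite, so Frobenius reciprocity (Proposition~\ref{frobenius}) applies and yields a nonzero element of $\Hom_{\Hplus}(\pi|_{\Hplus},\sigma^+)$. Hence $\sigma^+$ occurs in the restriction of $\pi$ to $\Hplus$, and restricting further from $\Hplus$ down to the subgroup $\f(\H(\adef))=\NN\bs\H(\adef)$ recovers $\sigma$ via the bijection $(\star)$, as required. For the final clause, when $\f$ is injective Remark~\ref{contrex} identifies $\Gamma$ with $\H(F)$, so every cuspidal representation is tautologically realized on $\Gamma$-invariant functions and the additional hypothesis is automatic.

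The step I expect to demand the most care is the very first one: correctly extracting from the hypothesis on $\sigma$ a central character $\omega$ of $\ZG(\adef)/\ZG(F)$ that restricts on $\Zun^+$ to the $\Zun/\Gamma$-eigencharacter governing $\sigma$, so that the bijection $(\star)$ is genuinely available. Once this dictionary is in place, the rest is a formal assembly of the Clifford/Frobenius machinery developed in Section~\ref{cliff} with the construction of the preceding subsection and the compactness statement of Proposition~\ref{coad}.
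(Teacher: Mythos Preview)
Your proof follows essentially the same route as the paper's, and the key ingredients (the decomposition under $\Zun/\Gamma$, the extension of $\omega_1^+$ to $\omega$, the bijection $(\star)$, induction to $\G(\adef)$, and Propositions~\ref{cliffb} and~\ref{frobenius} combined with Lemma~\ref{finiadele}) are all the right ones.

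There is one imprecision worth flagging. You write that $(\star)$ ``transports $\sigma$ to an irreducible cuspidal representation $\sigma^+$ of $\Hplus$'', but $(\star)$ only intertwines the right actions of $\f(\H(\adef))$, not of all of $\Hplus$. The image under $(\star)$ of a copy of $\sigma$ is an $\f(\H(\adef))$-irreducible subspace of $L^2_\cusp(\G(F)\bs\Hplus,\omega)$, and it has no reason to be $\Hplus$-invariant, let alone $\Hplus$-irreducible. The paper fills this in by taking the $\Hplus$-span of this image and then choosing an irreducible $\Hplus$-constituent $\sigma^+$ of that span; one then checks (by projecting back) that $\sigma^+|_{\f(\H(\adef))}$ still contains $\sigma$. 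Once this step is made explicit your argument is complete and matches the paper's.
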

 
\begin{proof} Consider  $\mathcal H$ the subspace 
of left $\Gamma$-invariant functions in the space of cuspidal square integrable functions modulo the center
$$L^2_{\cusp}(\H(F)\bs\H(\adef),\omega_0)$$
where $\omega_0$ is the character by which $\sigma$ acts when restricted to the center of $\H(\adef)$,
The space of the isotypic component, say $W_\sigma$, of $\sigma$ in $\mathcal H$ 
can be decomposed according to characters of $\Gamma\bs\Zun$ with $\Zun$ 
acting on the left.  Let $\omega_1$
be a character that occurs and consider the subspace $W_\sigma(\omega_1)$ of $W_\sigma$
cut out by this character. Choose $\omega$ extending $\omega_1$ to $\Zplus$. Then $W_{\sigma}(\omega_1)$
 can be mapped into a subspace of 
$\rho^+_{\cusp,\omega}$ via $(\star)$ and let $\sigma^+$ be  an irreducible constituent of the subspace
generated by the  image of $W_\sigma(\omega_1)$
under the action of $\Hplus$. 
We get a family of cuspidal representations for $\G$ by decomposing the induced representation 
$$\rho_{\sigma^+}=\text{Ind}^{\G(\adef)}_{ \Hplus}\sigma^+$$
According to \ref{cliffb} the various representations that occur in $\rho_{\sigma^+}$ are of the form
$\pi\otimes\chi$ for some $\pi$  where $\chi$  runs over characters of $\G(\adef)/\Hplus$. Thanks to Lemma~\ref{finiadele} and 
Proposition~\ref{frobenius} we know that $\sigma^+$ occurs in the restriction of $\pi$ to $\Hplus$ and, in turn, by construction, 
$\sigma$ occurs in the restriction of $\sigma^+$ to $\NN\bs\H(\adef)$.
The last statement follows from \ref{contrex}.
\end{proof}

\subsection{A reformulation}

\begin{definition}
We denote by $\mathcal{A}_{\G}(\H,\adef)$ the set of  $\G(\adef)$-conjugacy classes
of irreducible unitary representations of $\H(\adef)$ trivial on $\NN$.
\end{definition}

\begin{definition}

Two irreducible unitary representations $\pi $ and $\pi'$ of $\G(\adef)$ are said to be 
$\mathcal{E}_{\H}$-equivalent  if there exists a character $\mu$ of $\G(\adef)/\H(\adef)$ such that $\pi \otimes \mu \cong \pi'$.  
We denote by $\mathcal{E}_{\H}(\G, \adef)$ the  corresponding set of equivalence classes. 
\end{definition}

All elements in the $\mathcal{E}_{\H}$-class of some global $\pi$ have equivalent restrictions to $\H(\adef)$ and all components of 
the restriction
  belong to the same $\G$-packet. 
Let  $$R : \mathcal{E}_{\H}(\G, \adef) \to \mathcal{A}_{\G}(\H, \adef)$$  be the map
  which assigns to an $\mathcal{E}_{\H}$-equivalence class 
  represented by $\pi$ the $\G(\adef)$-packet   of components of $\pi\vert_{ \H(\adef)}$.   
  Observe that $R$ is  the  restricted product of  local restrictions. This makes sense since, for almost all places $v$,   the  restriction 
to $ \H_v$ of an 
unramified representation of $\G_v$
contains a unique  constituent that is unramified.

 \begin{proposition} \label{bijb}
The map
$$ R :\mathcal{E}_{\H}(\G, \adef) \to \mathcal{A}_{\G}(\H, \adef)$$
is a bijection. 
 \end{proposition}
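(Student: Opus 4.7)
The plan is to deduce this global bijection from its local counterpart, Proposition \ref{bija}, via the restricted tensor product decomposition of adelic unitary representations. Every irreducible unitary representation of $\G(\adef)$ admits a factorization $\pi = \bigotimes'_v \pi_v$, with $\pi_v$ irreducible unitary of $\G_v$ and unramified at almost all places $v$, and similarly $\sigma = \bigotimes'_v \sigma_v$ for $\H(\adef)$. The remark preceding the proposition---ensuring that for almost all $v$ the restriction to $\H_v$ of an unramified representation of $\G_v$ contains a unique unramified constituent---guarantees that $\pi|_{\H(\adef)}$ decomposes compatibly as a restricted tensor product of local restrictions, and that the $\G(\adef)$-conjugacy class of its constituents corresponds to the restricted product of the local $\G_v$-conjugacy classes described in Proposition \ref{local}. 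Consequently $R$ factors as the restricted product of the local maps $R_v \colon \mathcal{E}_{\H_v}(\G_v) \to \mathcal{A}_{\G_v}(\H_v)$, each a bijection by Proposition \ref{bija}.

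For surjectivity, given $[\sigma] \in \mathcal{A}_{\G}(\H, \adef)$ with $\sigma = \bigotimes'_v \sigma_v$, I would apply local surjectivity at each place to produce $[\pi_v] \in \mathcal{E}_{\H_v}(\G_v)$ with $R_v([\pi_v]) = [\sigma_v]$. At the almost all places where $\sigma_v$ is unramified, the local bijection identifies the unramified classes on both sides, permitting a choice of unramified $\pi_v$. The restricted tensor product $\pi := \bigotimes'_v \pi_v$ is then a well-defined irreducible unitary representation of $\G(\adef)$ satisfying $R([\pi]) = [\sigma]$ by construction.

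For injectivity, suppose $R([\pi]) = R([\pi'])$. At each $v$, $R_v([\pi_v]) = R_v([\pi'_v])$, and by Proposition \ref{bija} there exists a character $\mu_v$ of $\G_v/\Hplus_v$, unique modulo $X(\pi_v)$, such that $\pi'_v \cong \pi_v \otimes \mu_v$. At almost all $v$ both $\pi_v$ and $\pi'_v$ are unramified; using Lemma \ref{unr} (which forces $X(\pi_v)$ to consist of unramified characters when $\pi_v$ is) together with the fact that a ramified twist of an unramified representation is not itself unramified, one argues that the coset $\mu_v X(\pi_v)$ admits an unramified representative. With such a choice, $\mu := \bigotimes_v \mu_v$ defines a global character of $\G(\adef)/\Hplus$, which is a fortiori a character of $\G(\adef)/\f(\H(\adef))$, and $\pi' \cong \pi \otimes \mu$ yields the required $\mathcal{E}_{\H}$-equivalence.

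The delicate step is confirming the unramified choice for $\mu_v$ at almost all places, needed so that the local characters assemble into a continuous character of the compact abelian quotient $\Hplus \bs \G(\adef)$. An alternative that sidesteps this issue is a direct global application of Clifford's theory (Proposition \ref{cliffb}) with $A = \G(\adef)$ and $B = \Hplus$: use the normality of $\f(\H(\adef))$ in $\G(\adef)$ to produce a common $\f(\H(\adef))$-constituent of $\pi$ and $\pi'$, pass (possibly after a preliminary twist by an extension to $\G(\adef)/\f(\H(\adef))$ of a character of $\Hplus/\f(\H(\adef))$) to a common $\Hplus$-constituent, and conclude via Proposition \ref{cliffb} that the required character exists.
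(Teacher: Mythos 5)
Your proof is correct and follows essentially the same route as the paper: the paper's (much terser) argument likewise deduces injectivity from the local bijection of Proposition \ref{bija} and surjectivity from the local statement together with the fact that an unramified representation of $\H_v$ occurs in the restriction of an unramified representation of $\G_v$. Your added care at the unramified places — checking via the argument of Lemma \ref{unr} that the local twisting characters $\mu_v$ can be taken unramified almost everywhere so that they assemble into a character of $\G(\adef)/\f(\H(\adef))$ — is exactly the point the paper leaves implicit.
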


\begin{proof}
 The local analogue \ref{fini}  implies the injectivity of $R$. The surjectivity follows from the
 local analogue and the
 fact that if $\G_v$ and $\H_v$ are unramified
 any unramified representation of  $\H_v$ occurs in the restriction of
 an unramified representation of $\G_v$.
 \end{proof}

We denote by  $\mathcal{A}_{\G,\cusp}(\H, \adef)$ the subset of $\mathcal{A}_{\G}(\H, \adef)$
 of $\G$-packets that contain
some cuspidal automorphic representation of $\H(\adef)$.

We define 
$\mathcal{E}_{\H,\mathrm{\cusp}}(\G, \adef)$ to be the subset of $\mathcal{E}_{\H}(\G, \adef)$
of $\mathcal{E}_{\H}$-equivalence classes 
that contain some cuspidal automorphic representations of $ \G(\adef)$.

\begin{theorem}\label{main} Assume that  $\Gamma=\NN.\H(F)$ (this is true
in particular when $\f$ is injective).
The map $$R: \mathcal{E}_{\H}(\G, \adef) \to \mathcal{A}_{\G}(\H, \adef)$$
induces  a bijection 
$$\mathcal{E}_{\H,\cusp}(\G, \adef) \tilde{\longrightarrow} \mathcal{A}_{\G,\cusp}(\H, \adef)\,\,.$$
\end{theorem}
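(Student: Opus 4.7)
The plan is to deduce Theorem \ref{main} from the abstract bijection $R:\mathcal{E}_{\H}(\G,\adef)\to\mathcal{A}_{\G}(\H,\adef)$ provided by Proposition \ref{bijb}, by using Theorems \ref{cuspa} and \ref{cuspb} to pin down the image and preimage of the two ``cuspidal subsets'' under this bijection. No further structural work should be necessary: the whole content is that cuspidality matches up correctly on the two sides, and this is exactly what Theorems \ref{cuspa}--\ref{cuspb} provide.

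First, injectivity of the restricted map is automatic, since $\mathcal{E}_{\H,\cusp}(\G,\adef)$ is a subset of $\mathcal{E}_{\H}(\G,\adef)$ and $R$ is already injective on the larger set by Proposition \ref{bijb}. Next I would verify that $R$ actually sends $\mathcal{E}_{\H,\cusp}(\G,\adef)$ into $\mathcal{A}_{\G,\cusp}(\H,\adef)$. Pick a cuspidal representative $\pi$ of a class $[\pi]\in\mathcal{E}_{\H,\cusp}(\G,\adef)$. By Theorem \ref{cuspa}, the restriction of $\pi$ to $\NN\bs\H(\adef)$ contains a cuspidal automorphic representation $\sigma$ of $\H(\adef)$; by definition of $R$, the packet $R([\pi])$ is the $\G(\adef)$-conjugacy class of such components and in particular contains $\sigma$, so $R([\pi])\in\mathcal{A}_{\G,\cusp}(\H,\adef)$.

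The substantive step is surjectivity, and this is precisely where the hypothesis $\Gamma=\NN.\H(F)$ enters. Given a packet in $\mathcal{A}_{\G,\cusp}(\H,\adef)$, I would pick a cuspidal automorphic representative $\sigma$ of $\H(\adef)$ that is trivial on $\NN$. It is realized in some $L^2_{\cusp}(\H(F)\bs\H(\adef),\omega_0)$; the triviality of $\sigma$ on $\NN\subset \ZH(\adef)$ forces $\omega_0$ to be trivial on $\NN$, so the functions realizing $\sigma$ are automatically invariant on the left under $\NN$ and descend to $\NN\cdot\H(F)\bs\H(\adef)=\Gamma\bs\H(\adef)$. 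The hypothesis of Theorem \ref{cuspb} is thus fulfilled and yields a cuspidal automorphic representation $\pi$ of $\G(\adef)$ whose restriction to $\NN\bs\H(\adef)$ contains $\sigma$; hence $R([\pi])$ is the prescribed packet, as desired.

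The only delicate point, and the reason the hypothesis $\Gamma=\NN.\H(F)$ cannot be dropped in general, is the passage from ``$\sigma$ cuspidal automorphic on $\H(F)\bs\H(\adef)$'' to ``$\sigma$ realized on $\Gamma\bs\H(\adef)$''. When $\Gamma\supsetneq \NN\cdot\H(F)$ — as in the Grunwald--Wang type example of Remark \ref{contrex} — Theorem \ref{cuspb} need not apply to an arbitrary $\sigma$ in the packet, and one cannot necessarily find a cuspidal lift $\pi$. The stated hypothesis is exactly what removes this obstruction, so the bijection follows immediately once this compatibility is spelled out.
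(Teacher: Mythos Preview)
Your proposal is correct and follows essentially the same approach as the paper: the paper's proof is the one-line remark that, in view of Propositions~\ref{bija}, \ref{bijb} and Remark~\ref{contrex}, the theorem is a reformulation of Theorems~\ref{cuspa} and~\ref{cuspb}, and you have simply unpacked this into its natural three steps (injectivity from~\ref{bijb}, image in $\mathcal{A}_{\G,\cusp}$ from~\ref{cuspa}, surjectivity from~\ref{cuspb} via the hypothesis $\Gamma=\NN\cdot\H(F)$). Your explicit observation that $\NN$-triviality of $\sigma$ forces left-$\NN$-invariance of the realizing functions, hence descent to $\Gamma\bs\H(\adef)$, is the one point the paper leaves implicit.
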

\begin{proof} In view of Propositions \ref{bija}, \ref{bijb} and  Remark~\ref{contrex} 
this is nothing but
a reformulation of Theorems \ref{cuspa} and \ref{cuspb}.
\end{proof}

Observe that when  $\Gamma$ is strictly bigger than $\NN.\H(F)$ (in particular $\f$ is not injective)
the map 
$$\mathcal{E}_{\H,\cusp}(\G, \adef) \to \mathcal{A}_{\G,\cusp}(\H, \adef)$$
may not be surjective: an example is given in Remark~\ref{contrex}.
The image consists of classes of cuspidal representations that can be realized
in a subspace of $\Gamma$-left-invariant functions.

\begin{remarks} The reader should be aware of the following pitfalls.
\pni 1 - If $\sigma$ is a cuspidal representation of $\H(\adef)$ 
 it is not always the case that all  conjugates $\sigma^g$ for $g\in\G(\adef)$
are automorphic. Examples of this  fact do occur in the case $\H=SL(n)$ and $\G=GL(n)$
for representations that are ``endoscopic'' (see~\cite{LL} for the case $n=2$).
\pni 2 - Consider two cuspidal automorphic representations $\pi$ and $\pi'$ that
are  of the form $\pi'\simeq\pi\otimes\mu$;  it may happen that
$\mu$  cannot be chosen to be automorphic (see \cite{BL} where examples are constructed
for $\H=SL(n)$ and $\G=GL(n)$ provided $n\ge 3$).
\end{remarks}

\subsection{A multiplicity formula}\label{fail}
We assume moreover from now on that $\f$ is injective.
 Given an irreducible unitary representation $\pi$ of $\G(\adef)$ the restriction of  $\pi$ to $\H(\adef)$ splits into a direct sum with
finite multiplicities if $\pi_v$ is generic almost everywhere.  
In fact the  restriction to $ \H_v$ of an 
unramified representation 
contains a unique  constituent that is unramified.
The representation $\pi\vert_{\H(\adef)}$ is the direct sum of the restricted products of the
constituents of  the $\pi_v\vert_{\H_v}$.  
We know that locally everywhere the multiplicity is finite
(cf.~\ref{fini}).
But, whenever $\pi_v$ has a Whittaker model, the restriction
is multiplicity free.  Hence the global decomposition is a direct sum (infinite in general) and
with finite multiplicities if $\pi_v$ is generic almost everywhere.

We observe that given $\pi$ the set components of $\pi\vert_{\Hplus}$ is finite according to
Propositions \ref{cliffa} and \ref{finiadele}, but one should be aware that not all such representations
 will show up in $\rho^+_{\cusp,\omega}$. In fact, for example,
if $\G=GL(n)$ only one such $\sigma^+$, in the restriction to $\Hplus$ of a given $\pi$,
may occur  in $\rho^+_{\cusp,\omega}$ since otherwise
this would contradict the multiplicity one theorem for  cuspidal representations of $GL(n)$. 
On the other hand there may be more than one  $\sigma^+$ in the space  generated  by the isotypic component
of some $\sigma$ and they may be inequivalent. 
This is in fact the case when considering cuspidal representations of $SL(n)$ with multiplicity
greater than one (which may exist for $n\ge3$).  In such a case the various $\pi$'s  
containing $\sigma$ in their restriction to $\H(\adef)$ may differ by
non automorphic characters (see \cite{BL}).
More generally we have the following multiplicity formula.

\begin{theorem}\label{multi}
Assume $\G$ and $\H$ quasi-split.
Let $\pi$ be a generic cuspidal representation for $\G$
and $\sigma$ a generic cuspidal representation for $\H$ that
occurs in the restriction of $\pi$ to $\H(\adef)$.
Let $Y(\pi)$ be the group of characters $\mu$ of $\G(\adef)/\ZG(\adef)\H(\adef)$
such that $\pi\otimes\mu$ is also a  cuspidal representation. Let
$X_{loc}(\pi)$ the subgroup of characters
$\mu\in Y(\pi)$ such that $\pi\otimes\mu\simeq\pi$.
This is the restricted product over the set of places of $F$ of the $X(\pi_v)$.
Let $m(\pi)$ be the multiplicity of $\pi$ in the cuspidal spectrum for $\G$.
Then,  the multiplicity  $m(\sigma)$ of $\sigma$ in the cuspidal spectrum of $\H$ is given by
$$m(\sigma)=\sum_{\mu\in M(\pi)}  m(\pi\otimes\mu)$$
where $M(\pi)=Y(\pi)/X_{loc}(\pi).X$.

\end{theorem}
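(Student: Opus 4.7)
The plan combines three results from the paper: the isomorphism $(\star)$ of Section \ref{cusp}, which transfers cuspidal spectra between $H$ and $\mathcal{H}^+$; global Frobenius reciprocity (Proposition \ref{frobenius}, applicable since $X(\pi)$ is finite by Lemma \ref{finiadele}); and the Clifford decomposition of Propositions \ref{cliffa} and \ref{cliffb}. The genericity assumption enters essentially through the observation following Proposition \ref{local}: uniqueness of Whittaker models and its compatibility with induction guarantee that $\pi|_{\mathcal{H}^+}$ is multiplicity free, so in the notation of Proposition \ref{cliffa} the integer $m$ equals one, and every extension $\sigma^+$ of $\sigma$ appearing in $\pi|_{\mathcal{H}^+}$ does so with multiplicity exactly one.

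Fix an extension $\omega$ of the central character $\omega_1$ of $\sigma$ to $\ZG(\adef)$, trivial on $\ZG(F)$, and let $\sigma^+$ denote the resulting extension of $\sigma$ to $\mathcal{H}^+$ given by the construction of Section \ref{cusp}. Using $(\star)$ as an isomorphism of $\mathcal{H}^+$-modules (after transporting the $\mathcal{H}^+$-action across), one obtains $m(\sigma)=\sum_{\tau^+} m(\tau^+)$, summed over iso classes of $\mathcal{H}^+$-irreducibles $\tau^+$ with central character $\omega$ whose $H$-restriction contains $\sigma$.

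I would then use the identity $\rho_{cusp,\omega} = \mathrm{Ind}_{\mathcal{H}^+}^{\G(\adef)} \rho^+_{cusp,\omega}$ from the key construction and Frobenius reciprocity to deduce, for each cuspidal automorphic iso class $\pi'$ of $\G$ with central character $\omega$,
\[ m(\pi') = \sum_{\tau^+ \subset \pi'|_{\mathcal{H}^+}} m(\tau^+). \]
Proposition \ref{cliffb} applied to the pair $(\G(\adef),\mathcal{H}^+)$ parametrizes the iso classes of cuspidal automorphic $\pi'$ whose restriction to $\H(\adef)$ contains $\sigma$ as $\{\pi\otimes\mu : \mu \in Y(\pi)\}$ modulo $X_{loc}(\pi)$. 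Among these, tensoring by an automorphic character $\chi \in X$ preserves the $H$-restriction (since $\chi|_{\H(\adef)} = 1$), so the distinct extensions $\sigma^+_\mu = \sigma^+\otimes \mu|_{\mathcal{H}^+}$ of $\sigma$ arising as $\mathcal{H}^+$-constituents of these $\pi'|_{\mathcal{H}^+}$ are precisely parametrized by $M(\pi)=Y(\pi)/X_{loc}(\pi)\cdot X$.

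Assembling the pieces: each $\mu \in M(\pi)$ indexes a unique extension $\sigma^+_\mu$ contributing $m(\sigma^+_\mu)$ to the sum of the second step, and the corresponding family of iso classes $\{\pi\otimes\mu\cdot\chi : \chi \in X/X\cap X_{loc}(\pi)\}$ all containing $\sigma^+_\mu$ in their restriction each have multiplicity $m(\pi\otimes\mu\cdot\chi) = m(\pi\otimes\mu)$, by the invariance of the cuspidal spectrum under twisting by automorphic characters. After cancellation of the common $X$-orbit factor via the third step, one obtains
\[ m(\sigma) = \sum_{\mu \in M(\pi)} m(\pi\otimes\mu). \]
The main obstacle is precisely this final combinatorial step: carefully disentangling the roles played by (i) automorphic characters $\chi \in X$, which permute the iso classes $\pi\otimes\mu\cdot\chi$ while preserving the extension $\sigma^+_\mu$; (ii) local characters $\nu \in X_{loc}(\pi)$, which preserve the iso class of $\pi\otimes\mu$ itself; and (iii) non-automorphic $\mu \in Y(\pi)\setminus X\cdot X_{loc}(\pi)$, which shift to a distinct extension $\sigma^+_\mu$ of the same $\sigma$, so as to ensure that the quotient defining $M(\pi)$ captures each distinct contribution exactly once.
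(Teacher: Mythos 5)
Your strategy is the same as the paper's: multiplicity-freeness of $\pi\vert_{\H(\adef)}$ from uniqueness of Whittaker models, the isomorphism $(\star)$ to convert $m(\sigma)$ into multiplicities of extensions in $\rho^+_{\cusp,\omega}$, the identity $\rho_{\cusp,\omega}=\mathrm{Ind}_{\Hplus}^{\G(\adef)}\rho^+_{\cusp,\omega}$ combined with Frobenius reciprocity (Proposition \ref{frobenius}, licensed by Lemma \ref{finiadele}), and Clifford theory to parametrize the relevant $\pi'$ by $Y(\pi)$ modulo $X_{loc}(\pi)\cdot X$. All of that is sound, and your verification that $\mu\mapsto\sigma^+_\mu$ is well defined and injective on $M(\pi)$ is a genuine improvement in explicitness over the paper's three-sentence argument.

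However, there is a gap at the point you yourself flag as ``the main obstacle,'' and it is not merely combinatorial bookkeeping. Your second step gives
$m(\pi\otimes\mu)=\sum_{\tau^+}m(\tau^+)$, the sum running over \emph{all} constituents $\tau^+$ of $(\pi\otimes\mu)\vert_{\Hplus}$ that occur in $\rho^+_{\cusp,\omega}$; your assembly then silently replaces this sum by the single term $m(\sigma^+_\mu)$. These agree only if $\sigma^+_\mu$ is the \emph{unique} constituent of $(\pi\otimes\mu)\vert_{\Hplus}$ occurring in $\rho^+_{\cusp,\omega}$ --- a priori another constituent $\tau^+$, whose $\H(\adef)$-restriction contains only $\G(\adef)$-conjugates $\sigma^g\not\simeq\sigma$, could also occur there and would inflate $m(\pi\otimes\mu)$ beyond $m(\sigma^+_\mu)$, breaking the identity. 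This is exactly the assertion the paper makes (``any $\pi$ defines a unique $\sigma^+$ in $\rho^+_{\cusp,\omega}$,'' which for $\G=GL(n)$ the authors deduce from multiplicity one); it is the real content of the final step and needs an argument --- either this uniqueness, or a symmetric count over the whole $\G(\adef)$-packet of $\sigma$ showing the extra terms redistribute correctly. As written, your proof establishes $m(\sigma)=\sum_{\mu\in M(\pi)}m(\sigma^+_\mu)\le\sum_{\mu\in M(\pi)}m(\pi\otimes\mu)$ but not the claimed equality.
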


\begin{proof}  The uniqueness of Whittaker models tells us that the restriction of $\pi$
to $\H(\adef)$ is multiplicity free. In particular any $\pi$ defines a unique $\sigma^+$
in $\rho^+_{\cusp,\omega}$ and conversely this $\sigma^+$ is associated to
the set of cuspidal representations of the form $\pi\otimes\chi$ with $\chi\in X$ i.e.\!
trivial on $\Hplus$, in particular $\chi$ is automorphic. Now the set of representations
$\pi'$ in $ \rho_{\cusp,\omega}$ whose restriction to $\H(\adef)$ contains $\sigma$,
is the set of $\pi'=\pi\otimes\mu$ with $\mu\in Y(\pi)$.
\end{proof}

\subsection{Miscellaneous remarks}
Assume again $\f$ injective.
 Let $\ZO= \ZG(\adef)\cap \H(\adef)$.
 The group $$ \Zun=\ZG(\adef) \G(F) \cap \H(\adef)$$ is often equal to 
$\ZO.\H(F)$. For example, this  latter equality holds in the case  $\G=GL(n)$ and $\H=SL(n)$
whenever the local-global principle  for $n^{th}$-roots of unity holds for $F$ and $n$.  In fact, if $z\gamma\in \Zun$ which means
$\text{det}( z\gamma)=1$ then $\text{det}(\gamma)$ is locally everywhere an $n^{th}$-power, and, if the
local-global principle holds, this means that $\text{det}(\gamma)$ is itself an $n^{th}$-power and $\gamma$ can be rewritten as
$\zeta.\eta$ with $\zeta\in  \ZG(F)$ and $\eta\in \H(F)$, hence $z\gamma=z_0\eta$ with $z_0\in \ZO$.  This shows that, in this case,
the new argument is essentially identical to the argument used in \cite[Sect. 3]{LS}.

Transfer results similar to Theorem~\ref{cuspa} and Theorem~\ref{cuspb}
for  cuspidal automorphic forms, have been obtained by Chenevier \cite{Ch} 
under the condition $\Zun=\ZO.\H(F)$.

As observed in the introduction the map $f:\H\to G$ induces a map $\check f$ between
dual groups which in turn defines a map 
$$H^1(\check f):H^1(W_F,\check\G)\to H^1(W_F,\check\H)$$
between cohomology sets for Weil groups with values in dual groups.
The  induced correspondence between packets of
representations for $\H$ to packets for  $\G$ 
provided by Langlands functoriality conjectures should fit  with
\ref{local} and \ref{cuspa}.
If $f$ is injective Proposition \ref{bija}
 and Theorem~\ref{main} are compatible with lifting Theorems 7.1 and  8.1 in \cite{Lab}.

\appendix

\def\ni{\noindent}

\section{Central Morphisms}

\centerline{By Bertrand LEMAIRE}

\bigskip

Let $k$ be a field.
Recall that a morphism of algebraic groups $f: H\rightarrow G$ (over $k$)  is said to be {\it central} if the schematic kernel of $f$ is 
contained 
in the schematic center of $H$, which means that for any commutative $k$-algebra $A$, we have the inclusion
$$
\Ker \left( f_A: H(A) \rightarrow G(A)\right)  \subset Z(H(A)),
$$
where $Z(H(A))$ denotes the center of $H(A)$.\footnote{%
Note that if $k$ is of caracteristic $p>0$, a surjective central $k$-morphism 
(e.g. a central $k$-isogeny) may  be inseparable: for example, the map $t\mapsto t^2$ from the multiplicative group 
${\mathbb G}_m$ into itself, with $p=2$.}
From \cite[22.4]{B} we know that, given a connected reductive $k$-group $G$, the product morphism
$
Z^G\times G_{der} \rightarrow G
$
 where $Z^G$ and $G_{der}$ are respectively the (set-theoretic) center and the derived group of $G$, is a central $k$-isogeny.

Let $F$ be  a global field.  We denote by $\adef$ the adle ring of $F$. If $v$ is a place of $F$ its completion $F_v$ is
either $\RM$ or $\CM$ or
a non-Archimedean local field (i.e.\! a finite extension of ${\mathbb Q}_p$, resp. ${\mathbb F}_p((t))$).

\subsection{Surjective maps of tori}
\begin{lemma}\label{apptor}
Let $f: T\rightarrow S$ be a surjective morphism of tori.
\begin{enumerate}
\item[(1)] For any place $v$, the group $S(F_v)/f(T(F_v))$ is compact.
\item[(2)] The group $S(\adef)/f(T(\adef))S(F)$ is compact.
\end{enumerate}
\end{lemma}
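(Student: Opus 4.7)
My plan is to analyse the short exact sequence $1 \to N \to T \xrightarrow{f} S \to 1$, where $N := \ker f$ is an affine commutative group scheme of multiplicative type over $F$, by combining the associated flat cohomology long exact sequence with the standard structure theory of tori over local and global fields.

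For part~(1), I would factor $f$ as $T \twoheadrightarrow T/N^\circ \xrightarrow{\bar f} S$, where $N^\circ$ is the identity component of $N$ (a subtorus of $T$), making the first arrow a smooth quotient by a subtorus and $\bar f$ an isogeny with finite multiplicative-type kernel $M := N/N^\circ$. Taking $F_v$-points in the long exact cohomology sequence gives embeddings
\[
(T/N^\circ)(F_v)\big/f(T(F_v)) \hookrightarrow H^1(F_v, N^\circ), \qquad S(F_v)\big/\bar f((T/N^\circ)(F_v)) \hookrightarrow H^1(F_v, M),
\]
and both $H^1$'s are finite whenever the relevant group scheme is smooth (classical finiteness of Galois cohomology of tori and of smooth finite groups of multiplicative type over a local field). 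The delicate case is an inseparable isogeny in positive characteristic, where $M$ contains an infinitesimal $\mu_{p^n}$-part; the resulting contribution to the cokernel surjects onto a quotient of the principal units $1+\mathfrak m_v$ by $p^n$-th powers, which though possibly infinite remains compact, since $1+\mathfrak m_v$ itself is profinite. Hence $S(F_v)/f(T(F_v))$ is always compact.

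For part~(2), I would combine the local result with the topological decomposition $S(\adef) = S(\adef)^1 \cdot A_S$, where $S(\adef)^1 := \bigcap_{\chi \in X^*(S)^{\Gamma_F}} \ker |\chi|_{\adef}$ and $A_S \cong \mathbb R_{>0}^{r_S}$ is a connected supplement, together with the classical compactness of $S(\adef)^1/S(F)$ (which holds for any torus over a global field, with $S(F) \subset S(\adef)^1$ by the product formula). Surjectivity of $f$ implies injectivity of $f^* : X^*(S) \hookrightarrow X^*(T)$, and passing to $\Gamma_F$-invariants and tensoring with $\mathbb R$ yields an injection whose dual $A_T \twoheadrightarrow A_S$ is surjective. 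Consequently $f(T(\adef)) \cdot S(\adef)^1 = S(\adef)$, whence
\[
\frac{S(\adef)}{f(T(\adef))\,S(F)} \;=\; \frac{S(\adef)^1}{S(\adef)^1 \cap f(T(\adef))\,S(F)}
\]
is a quotient of the compact group $S(\adef)^1/S(F)$, hence itself compact.

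The main obstacle is the inseparable case in part~(1) in positive characteristic, where finiteness of the cokernel fails and one must rely on compactness of the principal-units group rather than on a finite $H^1$. Once this is handled, part~(2) follows routinely by assembling the local statement at each place with the classical compactness of $S(\adef)^1/S(F)$ and the rank-matching argument for the archimedean part.
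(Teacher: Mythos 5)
Your overall strategy is sound but genuinely different from the paper's, which avoids cohomology entirely: there one splits off the maximal $F$-split subtorus $S_d\subset S$, uses compactness of $\overline S(\adef)/\overline S(F)$ for the anisotropic quotient $\overline S=S/S_d$, and observes that the lattice $S_d(\varpi_v)=\Hom(X(S),\varpi_v^{\mathbb Z})$ of ``uniformizer points'' at a single fixed finite place is cocompact in $S(\adef)$ modulo $S(F)$, while the surjection $T_d\to S_d$ carries $T_d(\varpi_v)$ onto a finite-index sublattice; the same lattice argument gives part~(1), uniformly in all characteristics and with no separability issue. Your part~(2) is essentially this idea repackaged through the norm-one subgroup $S(\adef)^1$ (and, as written, it does not actually use part~(1)); for number fields it is complete and clean.

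Two points, however, need repair. First, the connected supplement $A_S\cong\mathbb R_{>0}^{r_S}$ exists only over number fields; over a function field $S(\adef)/S(\adef)^1$ is a discrete lattice, and your rank computation only yields that the image of $T(\adef)$ has \emph{finite index} in it, so the identity $f(T(\adef))\,S(\adef)^1=S(\adef)$ can fail in characteristic $p$. This is harmless --- the quotient is then an extension of a finite group by a continuous image of the compact group $S(\adef)^1/S(F)$ --- but it must be said, since the paper works over arbitrary global fields. Second, the inseparable case of part~(1) is not actually closed: the cokernel of the isogeny on $F_v$-points is, by the long exact sequence, exactly the image of $S(F_v)$ in $H^1_{fl}(F_v,M)$, i.e.\ precisely the group whose compactness is at stake, so invoking flat cohomology is circular unless that image is computed; and ``surjects onto a quotient of $1+\mathfrak m_v$'' points the wrong way for compactness (you need the cokernel to be a continuous image, or a closed subgroup, of a compact group). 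The economical fix is the paper's: exhibit inside $f(T(F_v))$ a cocompact subgroup of $S(F_v)$, namely the image of the uniformizer lattice (resp.\ of the connected split part at archimedean places) of the maximal split subtorus of $T$, which has finite index in that of $S$ because the dual map on cocharacter lattices of a surjection of tori has finite cokernel; the anisotropic part of $S(F_v)$ is compact and costs nothing.
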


\begin{proof}
We only give a proof for assertion (2), the proof of (1) being essentially the same but simpler. Let $S_d$ be the maximal $F$-split 
subtorus of $S$, and $X(S_d)$ the group of algebraic characters of $S$ (they are all defined over $F$). Let us fix a finite place 
$v$ of $F$, and a uniformizer element $\varpi_v$ 
of the completion $F_v$ of $F$ at $v$. The set
$$
S(\varpi_v)=\Hom (X(S),\varpi_v^{\mathbb Z})
$$
is a free abelian group of finite rank, and a co-compact subgroup of $S_d(F_v)$. It also naturally identifies with a subgroup of 
$S_d(\adef)$. Moreover $S_d(\varpi_v)\cap S_d(F)=\{1\}$ and the group $S_d(\adef)/S_d(\varpi_v)S_d(F)$ is compact. Now let 
$\overline{S}=S/S_d$. It is an $F$-anisotropic torus, hence the group $\overline{S}(\adef)/\overline{S}(F)$ is compact \cite[3.5]
{Sp}, which implies the group $S(\adef)/S_d(\adef)S(F)$ is compact. Since
$$
S_d(\adef) \cap (S_d(\varpi_v)S(F))= S_d(\varpi_v)S_d(F),
$$
we obtain the group $S(\adef)/S_d(\varpi_v)S(F)$ is compact. On the other hand, $f$ induces a surjective $F$-morphism $f_d: T_d 
\rightarrow S_d$ 
which sends $T_d(\varpi_v)$ onto a sub-lattice of $S_d(\varpi_v)$. Hence the group $S(\adef)/f(T_d(\varpi_v))S(F)$ is compact. This 
implies (2). 
\end{proof}

\subsection{Central surjective morphisms of reductive groups}
\begin{proposition}\label{appcomp}
Let $f:H \rightarrow G$ be a surjective central morphism of connected reductive groups. 
\begin{enumerate}
\item[(1)] For any place $v$, the quotient $G(F_v)/f(H(F_v))$ is an abelian compact group.
\item[(2)] The quotient $G(\adef)/ f(H(\adef))G(F)$ is an abelian compact group. 
\end{enumerate}
\end{proposition}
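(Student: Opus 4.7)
The plan is to reduce the proposition to the torus case handled by Lemma~\ref{apptor}, exploiting that $f$ induces an isomorphism on adjoint quotients (so that unipotent subgroups of $G$ lift canonically to $H$) and that $K := \ker f$ is central.

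For (1), abelianness is separate from compactness. To see abelianness, take $g_1, g_2 \in G(F_v)$ and choose an fppf cover $\mathrm{Spec}(A) \to \mathrm{Spec}(F_v)$ over which each $g_i$ lifts to $h_i \in H(A)$. Over $A \otimes_{F_v} A$ the two pullbacks of $h_i$ differ by elements of $K$, which are central in $H$, and so do not affect the commutator $[h_1, h_2]$. Thus $[h_1, h_2]$ descends by fppf descent to an element of $H(F_v)$ whose image under $f$ equals $[g_1, g_2]$, giving $[G(F_v), G(F_v)] \subset f(H(F_v))$. For compactness, fix a maximal $F_v$-torus $T_G \subset G$ and set $T_H := f^{-1}(T_G)$; since $K \subset Z(H) \subset T_H$, this is a maximal torus of $H$, and $f$ restricts to a surjection $T_H \twoheadrightarrow T_G$, so Lemma~\ref{apptor}(1) gives compactness of $T_G(F_v)/f(T_H(F_v))$.

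The heart of the compactness argument is to show that the natural map
\[
T_G(F_v)/f(T_H(F_v)) \longrightarrow G(F_v)/f(H(F_v))
\]
is surjective. I would prove this by a Bruhat-type reduction: choose a minimal $F_v$-parabolic $P_0 = M_0 U_0$ of $G$ with $T_G \subset M_0$ and opposite $\bar P_0 = M_0 \bar U_0$. Since $f$ is an isomorphism on adjoint quotients, the unipotent radicals $U_0$ and $\bar U_0$ lift uniquely along $f$ to unipotent $F_v$-subgroups of $H$ mapping isomorphically, so $U_0(F_v) \cup \bar U_0(F_v) \subset f(H(F_v))$. Bruhat then reduces the task to showing that $M_0(F_v)$ lies in $T_G(F_v) \cdot f(H(F_v))$; and since $P_0$ is minimal, $M_0$ is $F_v$-anisotropic modulo its connected center $Z^{M_0}_0 \subset T_G$, whence $M_0(F_v)/Z^{M_0}_0(F_v)$ is compact. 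A final application of Lemma~\ref{apptor}(1) to the torus $Z^{M_0}_0$ closes the argument.

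For (2), the proof is structurally identical, with Lemma~\ref{apptor}(2) playing the role of Lemma~\ref{apptor}(1). Picking a maximal $F$-torus $T_G$ of $G$ and its preimage $T_H \subset H$, one establishes the surjectivity of the analogous map
\[
T_G(\adef)/f(T_H(\adef)) T_G(F) \longrightarrow G(\adef)/f(H(\adef)) G(F)
\]
by Bruhat reduction applied now to a minimal $F$-parabolic of $G$. The main obstacle in both parts is to formalise this Bruhat step cleanly in the non-quasi-split setting: representatives of the Weyl group of $G$ cannot in general be lifted through $f$, and one must absorb the Weyl-group contributions into torus cosets, using the centrality of $K$ and the freedom to multiply by $T_G$-elements.
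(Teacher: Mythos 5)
Your overall skeleton --- abelianness from centrality of the kernel, then reduction of compactness to the minimal Levi, anisotropy modulo the centre, and finally Lemma~\ref{apptor} --- is the paper's skeleton, and your fppf-descent construction of a canonical commutator lift is an acceptable substitute for the paper's citation of the Borel--Tits morphism $\kappa:G\times G\to H$ with $f\circ\kappa=[\cdot,\cdot]$. But two points in the compactness step do not hold up.

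First, the assertion you call the heart of the argument, surjectivity of $T_G(F_v)/f(T_H(F_v))\to G(F_v)/f(H(F_v))$ for a maximal torus $T_G$, is false in general. Take $H=SL_1(D)$ and $G=D^\times/{\mathbb G}_m$ for a quaternion division algebra $D$ over $F_v$: the reduced norm identifies $G(F_v)/f(H(F_v))$ with $F_v^\times/(F_v^\times)^2$, whereas a maximal torus $T_G=E^\times/F_v^\times$ contributes only $N_{E/F_v}(E^\times)$, of index two. (Here $M_0=G$ and $Z^{M_0}_0$ is trivial, so your subsequent steps cannot rescue the surjectivity either.) What your reduction can legitimately deliver is that $G(F_v)/f(H(F_v))$ is a continuous image of a set of the form $(\text{compact})\cdot Z^{M_0}_0(F_v)$, which, combined with Lemma~\ref{apptor}(1), gives compactness of the quotient without any surjectivity onto it; the paper phrases the whole argument this way, working with the maximal \emph{split} torus $S$ and $M=Z_G(S)$ rather than a maximal torus. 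Relatedly, your Weyl-group obstacle in part (1) is resolvable but you do not resolve it: representatives of the relative Weyl group can be chosen in the subgroup generated by the root groups $U_{\pm\alpha}(F_v)$, which lift isomorphically to $H$, so they already lie in $f(H(F_v))$.

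Second, and this is the genuine gap, part (2) is not ``structurally identical'': there is no Bruhat decomposition of $G(\adef)$. Decomposing each component $g_v$ of an ad\`ele into its local Bruhat cell does not produce adelic data, because the cell coordinates of an integral element can be badly non-integral: already in $GL_2$, the element of $GL_2({\mathcal O}_v)$ that is lower-triangular unipotent with subdiagonal entry $\varpi_v^{\,n}$ lies in the big cell with Levi component $\mathrm{diag}(\varpi_v^{-n},\varpi_v^{\,n})$, so the tuple of Levi components of an ad\`ele in $\prod_v K_v$ need not lie in $M_0(\adef)$. This is exactly why the paper uses the Iwasawa decomposition $G(\adef)=\boldsymbol{K}P(\adef)$, a genuinely adelic statement: the compact factor $\boldsymbol{K}$ is harmless, $U(\adef)=f(U'(\adef))$ kills the unipotent radical, and one is reduced to $M(\adef)/f(M'(\adef))S(\adef)M(F)$ and then to Lemma~\ref{apptor}(2). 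Using Iwasawa instead of Bruhat also removes the Weyl-group issue in part (1). As written, your proof of (2) does not go through.
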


\begin{proof}
As above, we only give a proof of assertion (2). From \cite[2.2, 2.6]{BT} (see \cite[2.3]{BT}), there exists an 
$F$-morphism $\kappa:G \times G \rightarrow H$ such that for all $x,\,y\in H$, we have
$$
\kappa(f(x),f(y))= xyx^{-1}y^{-1}.
$$
So the commutator map $G\times G\rightarrow G,\, (x,y)\mapsto [x,y]= xyx^{-1}y^{-1}$ coincides with $f\circ \kappa$, and we have
$$
[G(\adef),G(\adef)]= f\circ \kappa (G(\adef)\times G(\adef))\subset f(H(\adef)).
$$
Hence $f(H(\adef))$ is an invariant subgroup of $G(\adef)$, and the group $G(\adef)/ f(H(\adef))$ is abelian. 
A fortiori the quotient $G(\adef)/f(H(\adef))G(F)$ is an abelian group. It remains to prove the compacity. 
Let $S$ be a maximal $F$-split torus in $G$, and $M=Z^G(S)$ the centralizer of $S$ in $G$. 
Let $P$ be a (minimal) parabolic $F$-subgroup of $G$ with Levi component $M$, and $U=U_P$ the unipotent radical of $P$. 
From \cite[22.6]{B}, the inverse image $S'$ of $S$ in $H$ is a maximal $F$-split torus in $H$, and the inverse image $P'$ of $P$ in 
$H$ is a minimal parabolic $F$-subgroup of $H$. Put $M'=Z^H(S')$ and $U'=U_{P'}$. From loc.~cit., $f$ induces a surjective 
$F$-morphism  
$M'\rightarrow M$ and an $F$-isomorphism $U'\rightarrow U$. Moreover, $M'\rightarrow M$ is central. On the other hand, we have 
the Iwasawa decomposition
$
G(\adef)= \boldsymbol{K}P(\adef)
$
where $\boldsymbol{K}= \prod_v\boldsymbol{K}_v$ is an $M$-admissible  maximal compact subgroup of $G(\adef)$. Hence the 
product map 
$\boldsymbol{K}\times P(\adef)\rightarrow G(\adef)$ gives a surjective map
$$
\boldsymbol{K} \times (P(\adef)/f(P'(\adef))P(F) \rightarrow G(\adef)/ f(H(\adef))G(F).
$$
Since $f(U'(\adef))=U(\adef)$, we have
$$P(\adef)/f(P'(\adef))P(F)= M(\adef)/f(M'(\adef))M(F).$$
So we just need to prove the compacity of the quotient (that we already know to be an abelian group) $M(\adef)/f(M'(\adef))M(F)$.
Since the quotient $\overline{M}= M/S$ is a connected reductive $F$-anisotropic group, the set $\overline{M}(\adef)/\overline{M}(F)$ 
is compact 
\cite[3.5]{Sp}, which implies the set $M(\adef)/ S(\adef)M(F)$ is compact. A fortiori the quotient 
$$M(\adef)/f(M'(\adef))S(\adef)M(F)$$
is compact. Since
$$f(M'(\adef))\cap (S(\adef)M(F))= f(S'(\adef))S(F),$$
we are reduced to prove the compacity of the group $S(\adef)/ f(S'(\adef))S(F)$. It is given by the Lemma~\ref{apptor}.
\end{proof}

\begin{corollary}\label{appcompb}
Let $f:H\rightarrow G$ be an $F$-morphism of connected reductive groups such that the induced morphism 
$f_{der}: H_{der} \rightarrow G_{der}$ is a central isogeny.
\begin{enumerate} 
\item[(1)]For any place $v$, the quotient $G(F_v)/Z^G(F_v)f(H(F_v))$ is an abelian compact group.
\item[(2)]The quotient $G(\adef)/Z^G(\adef)f(H(\adef))G(F)$ is an abelian compact group.
\end{enumerate}
\end{corollary}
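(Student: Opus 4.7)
My plan is to deduce Corollary \ref{appcompb} from Proposition \ref{appcomp} by the standard device of enlarging $\H$ so that the map to $\G$ becomes surjective, exactly as in the proofs of Propositions \ref{coloc} and \ref{coad}. Let $\Z$ denote the connected component of $\ZG$, a central $F$-subtorus of $\G$, and put $\tilde\H := \Z \times \H$, which is again a connected reductive $F$-group. Define the $F$-morphism
$$\tilde\f : \tilde\H \longrightarrow \G, \qquad (z,h) \longmapsto z\,\f(h);$$
this is a homomorphism of algebraic groups because $\Z$ is central in $\G$ and therefore commutes with $\f(\H)$.

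I first need to check that $\tilde\f$ is surjective and central. Surjectivity follows from the decomposition $\G = \Z\cdot\G_{der}$, valid for any connected reductive group, combined with the surjectivity of $\f_{der}$ (a central isogeny is surjective): any $\bar g\in\G(\bar F)$ can be written $\bar g = z\cdot\f_{der}(h)$ with $z\in\Z(\bar F)$ and $h\in\H_{der}(\bar F)\subset\H(\bar F)$. For centrality, let $A$ be any commutative $F$-algebra and suppose $\tilde\f_A(z,h)=1$ for some $(z,h)\in\tilde\H(A)$; then $\f_A(h) = z^{-1}$ lies in $\Z(A)\subset\ZG(A)$, so its image in $\G_{ad}(A)$ is trivial. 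Because the hypothesis on $\f_{der}$ is equivalent to $\f_{ad}:\H_{ad}\to\G_{ad}$ being an isomorphism (see subsection 3.1 of the main text), the image of $h$ in $\H_{ad}(A)$ is also trivial, i.e.\ $h\in\ZH(A)$. Hence the schematic kernel of $\tilde\f$ is contained in $\Z\times\ZH$, which lies in the center of $\tilde\H$.

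Applying Proposition \ref{appcomp} to $\tilde\f$ then gives that the quotients $\G(F_v)/\tilde\f(\tilde\H(F_v))$ and $\G(\adef)/\tilde\f(\tilde\H(\adef))\G(F)$ are abelian compact. Since
$$\tilde\f(\tilde\H(F_v)) = \Z(F_v)\f(\H(F_v)) \subset \ZG(F_v)\f(\H(F_v)),$$
there is a continuous surjection from $\G(F_v)/\tilde\f(\tilde\H(F_v))$ onto $\G(F_v)/\ZG(F_v)\f(\H(F_v))$, exhibiting the latter as abelian and compact; the ad\`elic case is handled identically. The only non-formal step is the centrality verification, which ultimately rests on the equivalence between the hypothesis on $\f_{der}$ and the isomorphism property of $\f_{ad}$; everything else is a direct invocation of Proposition \ref{appcomp} or a formal observation about the containment of image subgroups.
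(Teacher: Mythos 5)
Your proof is correct and follows essentially the same route as the paper: enlarge the source by a central torus so that the map to $\G$ becomes a surjective central morphism, then invoke Proposition \ref{appcomp} and pass to the further quotient. The only cosmetic difference is that the paper uses the composite of central $F$-isogenies $Z^G\times H_{der}\to Z^G\times G_{der}\to G$ (so centrality is immediate from general facts about isogenies), whereas you work with $\Z\times\H\to\G$ and verify surjectivity and centrality directly via $\f_{ad}$; both verifications are sound.
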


\begin{proof}
The morphism 
$$\textrm{id} \times f_{der} : Z^G \times H_{der} \rightarrow Z^G \times G_{der}$$
and the product morphism
$Z^G \times G_{der} \rightarrow G$
are central $F$-isogenies. The composition of these two morphisms 
$Z^G \times H_{der} \rightarrow  G$
is also a central $F$-isogeny. This implies the corollary.
\end{proof}

\begin{remarks}
In the corollary, we may replace $Z^G$ by its connected component $Z$, which is the maximal central $F$-torus in $G$; the product 
morphism $Z\times G_{der} \rightarrow G$ is still a central $F$-isogeny. 
\end{remarks}

\bibliographystyle{amsalpha}

\end{document}